\theoremstyle{plain}
\newtheorem{thm}{Theorem}[section]
\newtheorem{lem}[thm]{Lemma}
\newtheorem{cor}[thm]{Corollary}
\newtheorem{prop}[thm]{Proposition}
\theoremstyle{definition}
\newtheorem{conj}[thm]{Conjecture}
\theoremstyle{remark}
    \newcommand{\FB}{{\mathbf{B}}}
    \newcommand{\sH}{{\mathscr{H}}}
    \newcommand{\BC}{{\mathbb {C}}}
    \newcommand{\BQ}{{\mathbb {Q}}} \newcommand{\BR}{{\mathbb {R}}}
     \newcommand{\BZ}{{\mathbb {Z}}}
    \newcommand{\CA}{{\mathcal {A}}} \newcommand{\CB}{{\mathcal {B}}}
   \newcommand{\F}{{\mathrm{F}}}
    \newcommand{\ad}{{\mathrm{ad}}}
    \newcommand{\Ad}{{\mathrm{Ad}}}
     \newcommand{\bsc}{{\mathrm{bsc}}}
    \newcommand{\Gal}{{\mathrm{Gal}}} 
    \newcommand{\GL}{{\mathrm{GL}}}
    \newcommand{\Hom}{{\mathrm{Hom}}}
    \newcommand{\id}{{\mathrm{id}}}
     \newcommand{\ind}{{\mathrm{ind}}}
    \newcommand{\Irr}{{\mathrm{Irr}}}
     \newcommand{\Out}{{\mathrm{Out}}}
    \newcommand{\red}{\mathrm{red}} \newcommand{\R}{{\mathrm{R}}}
    \newcommand{\res}{{\operatorname{res}}}
    \newcommand{\SL}{{\mathrm{SL}}}
    \newcommand{\op}{\mathrm{op}}
    \newcommand{\wt}{\widetilde}
    \newcommand{\wh}{\widehat}
    \newcommand{\incl}{\hookrightarrow}
    \newcommand{\sk}{\medskip}
    \newcommand{\lra}{\longrightarrow}
    \newcommand{\ra}{\rightarrow} 
    \newcommand{\bs}{\backslash}
    \newcommand{\s}{\sk\noindent}
\title{Distinction of regular depth-zero supercuspidal $L$-packets}
\author{Chong Zhang}
\begin{document}
\date{}
\maketitle

\begin{abstract}
In this paper, we study the relative Langlands program formulated by
Dipendra Prasad for Galois symmetric spaces. Under certain
assumptions, we confirm the necessary conditions in Prasad's
conjecture for regular depth-zero supercuspidal $L$-packets in the
sense of DeBacker-Reeder and Kaletha.
\end{abstract}

\section{Introduction}
\paragraph{Prasad's conjecture.} Let $F$ be a nonarchimedean
local field of characteristic 0. Let $G$ be a connected reductive
group defined over $F$, and $H$ a spherical subgroup of $G$. We say
that an irreducible admissible representation $\pi$ of $G(F)$ is
$H(F)$-distinguished if the space $\Hom_{H(F)}(\pi,1)$ is nonzero.
More generally, for a character $\omega$ of $H(F)$, we say that
$\pi$ is $\omega$-distinguished if $\Hom_{H(F)}(\pi,\omega)$ is
nonzero.  Under some assumptions on $G$ and the spherical variety
$X=G/H$, Sakellaridis and Venkatesh \cite{sv} formulate a relative
Langlands program whose aim is to understand the spectral
decomposition of $L^2\left(X(F)\right)$. Roughly speaking, they
conjecture that if $\pi$ is $H(F)$-distinguished then $\pi$ should
be a Langlands functorial lift from the  dual group $G_X$ attached
to $X$. When $X$ is a Galois symmetric space, Prasad \cite{pr1}
makes a more precise conjecture whose goal is to give sufficient and
necessary conditions, and even multiplicity formulas, for
representations to be distinguished.

Now let us introduce the setting of Prasad's conjecture. Let $E$ be
a quadratic field extension of $F$, $H$ a connected quasi-split
reductive group over $F$, and $G=\R_{E/F}H$ the Weil restriction of
$H$ with respect to the extension $E/F$. Twisting the group $H$ by
sending the nontrivial element $\sigma$ of $\Gal(E/F)$ to the
Chevalley involution $C$, we get a quasi-split reductive group
$H^\op$ over $F$. In other words, the group $H^\op$ is isomorphic to
$H$ over $E$, and its group of $F$-rational points is
$$H^\op(F)=\left\{g\in H(E):\ C(g)=\sigma(g)\right\}.$$ It is the group
$H^\op$ that serves the dual group $G_X$ in this framework. Let
$W_F$ be the Weil group of $F$, $\wh{G}$ the complex Langlands dual
group for $G$, and ${^LG}=\wh{G}\rtimes W_F$ the corresponding
$L$-group. The $L$-group $^L{H^\op}$ of $H^\op$ admits an
$L$-embedding $^LH^\op\ra{^LG}$. We refer the reader to
\S\ref{subsec. consequences} for more information about $H^\op$.

Let $H_\alpha$ be a pure inner twist of $H$ over $F$ such that
$\R_{E/F}H_\alpha=G$. Then the $F$-structure of $H_\alpha$ gives
rise to an action of $\sigma\in\Gal(E/F)$ on $G$ over $F$. We denote
this action by $\sigma_\alpha$, and if $H_\alpha=H$ we simply write
$\sigma$ instead of $\sigma_\alpha$. Note that, identifying
$H_\alpha(E)=H(E)=G(F)$, there exists $g_\alpha\in G(F)$ such that
$\sigma_\alpha=\Ad(g_\alpha)\circ\sigma$. For any reductive group
$A$ over $F$, Prasad considers a character $\omega_A$ of $A(F)$
associated with the quadratic extension $E$. The character
$\omega_A$ is trivial or quadratic. Let $\pi$ be an irreducible
admissible representation of $G(F)$. Prasad gives a conjectural
criterion of $\omega_{H_\alpha}$-distinction for $\pi$ in terms of
the Langlands parameter of $\pi$. Suppose that the conjectural local
Langlands correspondence holds for $G$. Then $\pi$ lies in an
$L$-packet $\Pi_\varphi(G)$, where $\varphi:W_F\ra{^LG}$ is a
Langlands parameter and $\Pi_\varphi(G)$ is a finite set of
equivalence classes of irreducible representations of $G(F)$
corresponding to the parameter $\varphi$. Let $\pi^{\sigma_\alpha}$
be the Galois conjugate of $\pi$ with respect to $\sigma_\alpha$.
Since $\sigma_\alpha$ and $\sigma$ differ by conjugation in $G(F)$,
we have $\pi^{\sigma_\alpha}\simeq\pi^\sigma$. Therefore the
equivalence class of $\pi^{\sigma_\alpha}$ does not depend on
$\alpha$. We denote by $\Pi_\varphi^\sigma(G)$ the $L$-packet that
$\pi^\sigma$ belongs to. On the other hand, we denote by $\pi^\vee$
the contragredient of $\pi$, and denote by $\Pi_\varphi^\vee(G)$ the
$L$-packet where $\pi^\vee$ lies in. As indicated by the notation,
we will explain in \S\S\ref{subsec. L-packets of
contragredient}--\ref{subsec. L-packets of galois conjugate} that
both $\Pi_\varphi^\vee(G)$ and $\Pi_\varphi^\sigma(G)$ do not depend
on $\pi$. In other words, we have
$$\Pi^\vee_\varphi(G)=\left\{\tau^\vee:\
\tau\in\Pi_\varphi(G)\right\}\quad\textrm{and}\quad
\Pi^\sigma_\varphi(G)=\left\{\tau^\sigma:\
\tau\in\Pi_\varphi(G)\right\}.$$ Part of \cite[Conjecture 2]{pr1}
can be stated as follows.

\begin{conj}\label{conj. prasad's main conjecture}
If $\pi$ is $\omega_{H_\alpha}$-distinguished, we
have\begin{enumerate}
\item $\Pi^\vee_{\varphi}(G)=\Pi^\sigma_{\varphi}(G)$.
\item The parameter $\varphi$ factors through $^LH^\op$.
\end{enumerate}
\end{conj}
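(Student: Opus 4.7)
The strategy is to exploit the explicit construction of regular depth-zero supercuspidal $L$-packets (due to DeBacker-Reeder and Kaletha) in terms of tame elliptic regular pairs $(S,\theta)$, where $S$ is an unramified elliptic maximal torus of a pure inner twist of $G=\R_{E/F}H$ and $\theta:S(F)\to\BC^\times$ is a depth-zero character in general position. Because $G$ is a Weil restriction, such an $S$ has the form $\R_{E/F}S_0$ for an unramified elliptic $S_0\subset H_\alpha/E$, and $\theta$ is then a character of $S_0(E)$. The parameter $\varphi$ factors through $^LS$ via the Kaletha parametrization, and the members of $\Pi_\varphi(G)$ are enumerated by characters of the component group. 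From this picture one reads off that $\Pi^\vee_\varphi(G)$ is attached to the pair $(S,\theta^{-1})$, while $\Pi^\sigma_\varphi(G)$ is attached to $(S,\theta^{\sigma_\alpha})$ after conjugating the $\sigma_\alpha$-twisted torus back into $S$ by $g_\alpha\in G(F)$. Both conclusions of Conjecture \ref{conj. prasad's main conjecture} therefore reduce to a single symmetry statement on $(S,\theta)$: modulo $N_{G(F)}(S)$-conjugation and an explicit twist by $\omega_{H_\alpha}|_{S(F)}$ one has $\theta^{\sigma_\alpha}\equiv\theta^{-1}$.

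The core step, and the main technical obstacle, is to derive this symmetry from the hypothesis of $\omega_{H_\alpha}$-distinction. I would realize $\pi\simeq\ind_{\Z(G)\cdot K}^{G(F)}\rho$, where $K$ is the stabilizer of the vertex attached to $S$ in the Bruhat-Tits building of $G$ and $\rho$ is inflated from the Deligne-Lusztig cuspidal attached to $\theta|_{S(F)_0}$, and apply Frobenius reciprocity together with the Mackey formula to decompose $\Hom_{H_\alpha(F)}(\pi,\omega_{H_\alpha})$ as a sum over $H_\alpha(F)\backslash G(F)/(\Z(G)\cdot K)$. Regularity of $\theta$, combined with the character formulas for Deligne-Lusztig representations, should force the vanishing of all but a single $H_\alpha(F)$-orbit of $G(F)$-conjugates of $K$; on this surviving orbit the non-vanishing of the local Hom factor forces an isomorphism $\rho^{\sigma_\alpha}\simeq\rho^\vee\otimes\omega_{H_\alpha}|_K$, from which the required identity $\theta^{\sigma_\alpha}=\theta^{-1}\cdot(\omega_{H_\alpha}\text{-twist})$ up to Weyl conjugation is extracted.

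With this torus-level symmetry in hand, conclusion (1) is immediate since $(S,\theta^{-1})$ and $(S,\theta^{\sigma_\alpha})$ then define the same $L$-packet. For (2), one uses that the Chevalley involution $C$ acts on $X_*(S)$ as $w_0\circ(-1)$ and hence dually on $\wh{S}$ as an involution of Chevalley type; via the local Langlands correspondence for tori, the relation $\theta^{\sigma_\alpha}\equiv\theta^{-1}$ translates into a $(\sigma\circ C)$-equivariance of $\varphi|_{W_E}:W_E\to\wh{S}$, which is precisely the factorization condition $\varphi(W_F)\subset{^LH^\op}$ with respect to the embedding $^LH^\op\hookrightarrow{^LG}$ recalled in \S\ref{subsec. consequences}. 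The hardest point is the Mackey orbit analysis: tracking the character $\omega_{H_\alpha}$ correctly under the pure inner twist $\alpha$, and using the regularity of $\theta$ to eliminate the exotic double cosets on which Deligne-Lusztig cancellation might otherwise fail.
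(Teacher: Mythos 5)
Your overall strategy is recognizably close to the paper's: reduce to a symmetry at the level of the pair $(S,\chi)$ by pushing the distinction hypothesis through Mackey-type double-coset analysis at the vertex stabilizer and into the finite-field quotient, then read off both conclusions from the explicit parametrization of the packet. The deduction of part (2) from $C\circ\varphi\sim\delta\circ\varphi$ via the description of $^LH^\op$ inside $^LG$ is also exactly what the paper does in Corollary \ref{cor. distinction and packets}. So you are on the right track. However, there are three substantive gaps in the middle of your argument.

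First, the claim that regularity of the character ``should force the vanishing of all but a single $H_\alpha(F)$-orbit'' is both unneeded and unjustified; the paper's proof of Proposition \ref{prop. stable torus} only needs one nonvanishing summand, and makes no assertion of uniqueness. Second, and more seriously, passing from nonvanishing of $\Hom_{K^{\theta_i}}(\rho,1)$ to an isomorphism $\rho^{\sigma_\alpha}\simeq\rho^\vee\otimes\omega_{H_\alpha}|_K$ is not automatic from ``character formulas for Deligne-Lusztig representations''; the paper has to invoke Lusztig's distinction criterion (Lemma \ref{lem. finite field}, from \cite{lu00}) at the finite-field level, which produces an $\F_i$-stable conjugate torus $\mathsf{S}'$ with $\bar\chi'$ trivial on $\mathsf{S}'(k_F)^{\F_i}$, and then lifts it with \cite{hj12}. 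Your sketch skips this step, which is the technical heart of the argument. Third, the expressions $\omega_{H_\alpha}|_K$ and $\omega_{H_\alpha}|_{S(F)}$ are not well-defined: $\omega_{H_\alpha}$ is a character of $H_\alpha(F)$, while $K$ and $S(F)$ live in $G(F)$ and are not contained in $H_\alpha(F)$. The paper handles the twisted case by first constructing a $\sigma$-invariant inflation $\omega_G$ of $\omega_H$ to all of $G(F)$ (Lemma \ref{lem. inflate character}, using the unramified tower $F\subset E\subset E'$), and then applying the untwisted Proposition \ref{prop. distinction for trivial} to $\pi\otimes\omega_G$; without some such extension your proposed twist cannot even be formulated. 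Finally, note the paper also needs the genuine Hakim-Murnaghan theorem (not just raw Mackey) to reduce the Hom-space on $G(F)$ to a finite sum over double cosets $g_i$ with $\tau(g_i)\in\wt K$, since the induction is from an open compact-mod-center subgroup.
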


Let $Z_\varphi$ be the centralizer of $\varphi$ in $\wh{G}$,
$C_\varphi$ the group of connected components of $Z_\varphi$, and
$\mu$ an irreducible representation of $C_\varphi$ corresponding to
$\pi$. Prasad \cite[Conjecture 2]{pr1} also provides a sufficient
condition for $\omega_{H_\alpha}$-distinction in terms of $\mu$, and
a conjectural formula for the multiplicity
$$\dim\Hom_{H_\alpha}(\pi,\omega_{H_\alpha})$$ in terms of
fibers of functorial lifts. Moreover, besides the nonarchimedean
case, the archimedean case $F=\BR$ is equally considered in
\cite{pr1}. See \cite[Conjecture 2]{pr1} for the precise statements.
Previous works by other authors for specific $H$ or $\pi$  are well
discussed in \cite{pr1}.

\paragraph{Main results.}
Assume that the characteristic of the residue field of $F$ is not 2.
In this paper, we study Conjecture \ref{conj. prasad's main
conjecture} in the case where $H$ is unramified, $E$ unramified over
$F$, and $\pi$ in a regular depth-zero supercuspidal $L$-packet.
Regular depth-zero supercuspidal $L$-packets are first constructed
by DeBacker and Reeder \cite{dr09} for pure inner twists of
unramified $p$-adic groups. Later Kaletha\cite{ka14} reinterprets
and enlarges their construction for extended pure inner twists.
These two kinds of $L$-packets are the same on pure inner twists,
and both satisfy the refined local Langlands conjectures (cf.
\cite[Conjectures E and F]{ka15}). We will adopt the formalism of
\cite{ka14}, and consider all the extended pure inner twists $H_a$
of $H$ over $F$. This formulation enables us to treat the problem of
distinction not only for quasi-split groups but also their extended
pure inner twists.

Now let $H$ be unramified. We take an extended pure inner twist
$H_a$ of $H$. Let $G_a=\R_{E/F}H_a$ and $\pi$ be an irreducible
admissible representation of $G_a(F)$. We always suppose that $\pi$
is in a regular depth-zero supercuspidal $L$-packet. The following
theorem (Theorem \ref{thm. theorem on representation}) is about the
representation $\pi$ itself rather than its $L$-packet.

\begin{thm}\label{thm. theorem on representation}
If $\pi$ is $\omega_{H_a}$-distinguished, we have
$$\pi^\vee\simeq\pi^\sigma.$$
\end{thm}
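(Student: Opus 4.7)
\emph{Plan.} The plan is a matrix-coefficient argument in the spirit of Flicker: from the distinguishing functional I build a nonzero $G_a(F)$-invariant bilinear pairing on $\pi\otimes\pi^{\sigma_a}$, which by irreducibility forces $\pi^{\sigma_a}\simeq\pi^\vee$. Combined with $\pi^{\sigma_a}\simeq\pi^\sigma$ (noted in the introduction), this will yield the theorem. The regular depth-zero hypothesis enters only through supercuspidality, which guarantees convergence.

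\emph{Construction of the pairing.} Fix a nonzero $\ell\in\Hom_{H_a(F)}(\pi,\omega_{H_a})$ and set $f_v(g)=\ell(\pi(g)v)$. Because $\omega_{H_a}$ is at most quadratic, the product $f_v(g)f_w(\sigma_a(g))$ is left $H_a(F)$-invariant. For $z\in Z(F)$ one has $z\sigma_a(z)\in Z(F)^{\sigma_a}\subset H_a(F)$, so evaluating $\ell$ at $\pi(z\sigma_a(z))v$ and using $H_a(F)$-equivariance gives $\omega_\pi(z)\omega_\pi(\sigma_a(z))=\omega_{H_a}(z\sigma_a(z))=1$, the last equality using triviality of $\omega_{H_a}$ on norms; hence the product is also $Z(F)$-invariant, and supercuspidality makes it compactly supported modulo center. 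I then define
\[
B(v,w)=\int_{Z(F)H_a(F)\backslash G_a(F)}f_v(g)\,f_w(\sigma_a(g))\,dg,
\]
and a change of variables $g\mapsto gg_0^{-1}$ shows $B(\pi(g_0)v,\pi(\sigma_a(g_0))w)=B(v,w)$, i.e.\ $B\in\Hom_{G_a(F)}(\pi\otimes\pi^{\sigma_a},\BC)$.

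\emph{Nonvanishing, conclusion, and main obstacle.} To see $B\not\equiv 0$ I exploit the compact-induction model $\pi=c\textrm{-Ind}_J^{G_a(F)}\widetilde\rho$ from the DeBacker-Reeder-Kaletha construction: for vectors $v,w$ suitably supported on $J$ and paired nontrivially by $\ell$, the functions $f_v$ and $f_w\circ\sigma_a$ are supported on narrow open compact neighborhoods of the identity coset, the integrand is a nonzero constant on a common such neighborhood, and no cancellation occurs elsewhere, so $B(v,w)\neq0$. Irreducibility of $\pi$ and $\pi^{\sigma_a}$ then converts this pairing into the equivalence $\pi^\vee\simeq\pi^{\sigma_a}\simeq\pi^\sigma$. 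The main obstacle is this nonvanishing step: convergence and invariance of $B$ are formal once the central-character compatibility above is granted, but ensuring that the local contributions from distinct double cosets do not cancel requires a careful analysis of the $\sigma_a$-fixed points in the Bruhat-Tits building of $G_a$ together with the Frobenius-reciprocity decomposition of $\ell$ in the compact-induction model.
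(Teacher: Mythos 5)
Your approach (Flicker-style matrix-coefficient pairing) is genuinely different from the paper's, which instead uses Hakim--Murnaghan theory to reduce distinction to a statement about the pair $(S,\chi)$ (Proposition \ref{prop. stable torus}), via Mackey decomposition and Lusztig's criterion over finite fields, and then reads off $\pi^\sigma\simeq\pi^\vee$ directly from $\chi|_{S^\theta(F)}=1$. Your construction of the pairing $B$ is fine: the $H_a(F)$-invariance of the integrand uses $\omega_{H_a}^2=1$, the $Z(F)$-invariance reduces to triviality of $\omega_{H_a}$ on norms $z\sigma_a(z)$, which indeed holds (it is equivalent to $\omega_G^\sigma=\omega_G$ in the paper's Lemma \ref{lem. inflate character}), convergence is supercuspidality, and the equivariance computation is correct, so a nonzero $B$ would give $\pi^{\sigma_a}\simeq\pi^\vee$ by Schur.

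The genuine gap is the nonvanishing of $B$, and you have underestimated it. There is no positivity available: $B$ pairs $f_v(g)$ against $f_w(\sigma_a(g))$, not against $\overline{f_v(g)}$, so the diagonal terms are not manifestly nonzero, and the pairing is not Hermitian. Since $\dim\Hom_{G_a(F)}(\pi\otimes\pi^{\sigma_a},\BC)\le 1$, the statement $B\ne 0$ is logically \emph{stronger} than the theorem you want (it is the theorem plus the claim that this particular integral is nonzero); it cannot be proved by soft manipulations, and your sketch does not address it. Your claimed picture (``$f_v$ and $f_w\circ\sigma_a$ supported on narrow neighborhoods of the identity coset, no cancellation elsewhere'') is not automatic: in the compact-induction model the support of $f_v$ in $Z(F)H_a(F)\backslash G_a(F)$ is governed by which $J$--$H_a(F)$ double cosets carry the distinguishing functional, and pinning this down is exactly what the paper's Proposition \ref{prop. stable torus} does through the Hakim--Murnaghan decomposition and Lusztig's Lemma \ref{lem. finite field}. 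Even granting a single relevant double coset, you would still need a nonvanishing statement at the finite-group level, which again is what the paper extracts from \cite{lu00}. Finally, the remark that ``the regular depth-zero hypothesis enters only through supercuspidality'' is itself a warning sign: if the argument ran only on supercuspidality, it would prove this part of Prasad's conjecture for all supercuspidal $\pi$, which is open; the depth-zero Deligne--Lusztig structure is what the paper uses in an essential way, precisely to control the step you have left open.
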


We mainly use Hakim-Murnaghan theory to prove Theorem \ref{thm.
theorem on representation}, and the proof heavily relies on the
construction of $\pi$. Now let $\varphi:W_F\ra{^LG}$ be a Langlands
parameter attached to $\pi$. To prove Conjecture \ref{conj. prasad's
main conjecture}, our strategy is to deduce the properties of
$\varphi$ from Theorem \ref{thm. theorem on representation}. The
regular depth-zero supercuspidal $L$-packet $\Pi_\varphi$ associated
with $\varphi$ is defined to be the disjoint union
$$\bigsqcup\Pi_\varphi(G_b)$$ where $G_b$ runs over extended pure
inner twists of $G$ over $F$, and $\Pi_\varphi(G_b)$ is the
$L$-packet of $G_b(F)$ attached to $\varphi$. Each $L$-packet
$\Pi_\varphi(G_b)$ is explicitly constructed. We will sometimes call
$\Pi_\varphi$ an {\em enlarged $L$-packet} to distinguish it from
$L$-packets $\Pi_\varphi(G_b)$. From the definition of $\Pi_\varphi$
we see that the word ``enlarged'' means that the $L$-packets of
extended pure inner twists are grouped together and thus can be
studied uniformly. Let $Z_\varphi$ be the centralizer of $\varphi$
in $\wh{G}$, which is an abelian group in our situation. There is a
nice parametrization of $\Pi_\varphi$ by the group $Z_\varphi^D$ of
characters of $Z_\varphi$. After fixing a Whittaker datum, the
correspondence
$$\iota: Z_\varphi^D\lra\Pi_\varphi$$ is canonical. We call $\iota$
an {\em enlarged local Langlands correspondence}. For $\mu\in
Z_\varphi^D$ such that $\iota(\mu)=\pi$ we call $(\varphi,\mu)$  a
{\em refined Langlands parameter} of $\pi$. We will study the
refined Langlands parameters of $\pi^\vee$ and $\pi^\sigma$ (see
Proposition \ref{prop. correspondence for galois conjugate}), and
obtain a relation between them (see Theorem \ref{thm. distinction
and parameters}) if $\pi$ is $\omega_{H_a}$-distinguished. With this
relation at hand, we can answer Conjecture \ref{conj. prasad's main
conjecture}. We say that an extended pure inner twist $G_b$ of $G$
comes from $H$ if $G_b=\R_{E/F}H_b$ for some extended pure inner
twist $H_b$ of $H$. We consider a subset $\Pi_\varphi^\circ$ of
$\Pi_\varphi$ which is a disjoint union of $\Pi_\varphi(G_b)$ for
those $G_b$ coming from $H$. As before it makes sense to define the
Galois conjugate $\tau^\sigma$ for $\tau\in\Pi_\varphi^\circ$. Set
$$\Pi^{\circ,\vee}_\varphi=\left\{\tau^\vee:\
\tau\in\Pi^\circ_\varphi\right\}\quad\textrm{and}\quad
\Pi^{\circ,\sigma}_\varphi=\left\{\tau^\sigma:\
\tau\in\Pi^\circ_\varphi\right\}.$$ The advantage of the enlarged
local Langlands correspondence is that it enables us to compare
$\Pi^{\circ,\vee}_\varphi$ with $\Pi^{\circ,\sigma}_\varphi$, and
not merely $\Pi_\varphi^\vee(G_a)$ with $\Pi_\varphi^\sigma(G_a)$.

\begin{thm}\label{thm. intro. distinction and packets}
If $\pi$ is $\omega_{H_a}$-distinguished, we  have
\begin{enumerate}
\item $\Pi^{\circ,\vee}_{\varphi}=\Pi^{\circ,\sigma}_{\varphi}$.
\item The parameter $\varphi$ factors through $^LH^\op$.
\end{enumerate}
\end{thm}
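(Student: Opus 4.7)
The plan is to deduce Theorem \ref{thm. intro. distinction and packets} from Theorem \ref{thm. distinction and parameters} by pushing the relation between refined Langlands parameters through the enlarged local Langlands correspondence $\iota:Z_\varphi^D\to\Pi_\varphi$. Since $\pi$ is $\omega_{H_a}$-distinguished, Theorem \ref{thm. theorem on representation} gives $\pi^\vee\simeq\pi^\sigma$; combining this with Proposition \ref{prop. correspondence for galois conjugate} identifies the refined parameters of $\pi^\vee$ and $\pi^\sigma$ explicitly, and Theorem \ref{thm. distinction and parameters} records the precise relation between them.

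For part (1) I would proceed as follows. Under $\iota$ the operations $\tau\mapsto\tau^\vee$ and $\tau\mapsto\tau^\sigma$ on $\Pi_\varphi^\circ$ correspond, via Proposition \ref{prop. correspondence for galois conjugate} and its analogue for contragredients, to two explicit transformations on the subset of $Z_\varphi^D$ parametrizing $\Pi_\varphi^\circ$. The key observation is that the relation supplied by Theorem \ref{thm. distinction and parameters} is uniform in the character $\mu\in Z_\varphi^D$: the ingredients (the Chevalley involution, $\sigma$, the character $\omega_{H_a}$, and the $L$-embedding data) do not depend on $\mu$, so the relation holds at every character parametrizing $\Pi_\varphi^\circ$ and not merely at the single character corresponding to $\pi$. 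Transporting this uniform equality back through $\iota$ yields $\Pi_\varphi^{\circ,\vee}=\Pi_\varphi^{\circ,\sigma}$.

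For part (2) I would exploit the defining property of $H^\op$ as the twist of $H$ by the Chevalley involution $C$: writing $\wh G=\wh H\times\wh H$ with the standard Weil-restriction Galois action, one checks that $\varphi$ factors through ${^LH^\op}\hookrightarrow{^LG}$ if and only if the two components of $\varphi|_{W_E}$ are exchanged, up to conjugacy in $\wh H$, by $C$. Under the local Langlands correspondence this compatibility is exactly what the isomorphism $\pi^\vee\simeq\pi^\sigma$ produces at the level of parameters, so Theorem \ref{thm. theorem on representation} supplies the required factorization.

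The principal obstacle will be the careful bookkeeping needed to match the enlarged LLC on both sides: tracking the effect of Galois conjugation and contragredient on the fixed Whittaker datum used to normalize $\iota$, and verifying that the equality in part (1) really involves only those extended pure inner twists $G_b$ that come from $H$, so that $\Pi_\varphi^\circ$ and not the full enlarged packet appears. A related subtlety is that $\tau\mapsto\tau^\vee$ and $\tau\mapsto\tau^\sigma$ may each permute $Z_\varphi^D$ non-trivially; the content of Theorem \ref{thm. distinction and parameters} is precisely that these two permutations agree on the characters parametrizing $\Pi_\varphi^\circ$.
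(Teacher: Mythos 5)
Your overall route is correct and matches the paper: deduce Theorem \ref{thm. intro. distinction and packets} from Theorem \ref{thm. distinction and parameters}, which in turn comes from Theorem \ref{thm. theorem on representation} via Proposition \ref{prop. correspondence for galois conjugate} and the contragredient formula \eqref{equ. correspondence for contragredient}. Part (2) is argued exactly as in the paper: writing $\varphi_0=(\varphi_1,\varphi_2)$, the conjugacy $C\circ\varphi\sim\delta\circ\varphi$ forces $\varphi_2\sim C\circ\varphi_1$, which is the factorization through $^LH^{\op}$.

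However, for part (1) your ``uniformity in $\mu$'' claim is not a valid step. Theorem \ref{thm. distinction and parameters} has the hypothesis that $\pi(S_\mu,\chi_\mu)$ is distinguished, and the conjugating element $\wh g\in\wh G$ realizing $(C\circ\varphi,\mu)\sim(\delta\circ\varphi,\mu\circ\delta^{-1})$ is produced from that specific distinguished $\pi$; nothing entitles you to assert the refined relation for other $\mu'\in Z_\varphi^{D,\circ}$, since those representations need not be distinguished. If taken literally, your ``the relation holds at every character'' would give the much stronger statement that $\tau^\vee\simeq\tau^\sigma$ for every $\tau\in\Pi_\varphi^\circ$, which is neither proved nor needed. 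The correct (and simpler) argument is: discard the $\mu$-component of Theorem \ref{thm. distinction and parameters} to obtain the bare parameter conjugacy $C\circ\varphi\sim\delta\circ\varphi$; since $L$-packets depend only on the $\wh G$-conjugacy class, this gives $\Pi_{C\circ\varphi}(G_b)=\Pi_{\delta\circ\varphi}(G_b)$ for every $b\in\FB(G)_\bsc^\circ$. Combine this with Kaletha's identification $\Pi_\varphi^{\circ,\vee}=\Pi_{C\circ\varphi}^\circ$ and Proposition \ref{prop. packet of galois conjugate}'s identification $\Pi_\varphi^{\circ,\sigma}=\Pi_{\delta\circ\varphi}^\circ$ to conclude $\Pi_\varphi^{\circ,\vee}=\Pi_\varphi^{\circ,\sigma}$. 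In short: you need the set-level equality, not a refined-parameter equality for every $\mu$, and attempting the latter is both unnecessary and unjustified.
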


In fact, the sets $\Pi_\varphi^{\circ,\vee}$ and
$\Pi_\varphi^{\circ,\sigma}$ are subsets of certain enlarged
$L$-packets. The $L$-group $^LG$ is equipped with two involutions
$C$ and $\delta$, which are related to the actions of taking
contragredient and Galois conjugate on the representations
respectively. Thus, starting from the parameter $\varphi$, we have
two other parameters $C\circ\varphi$ and $\delta\circ\varphi$.
Kaletha \cite[\S5]{ka13} shows that
$\Pi^{\vee}_\varphi(G_b)=\Pi_{C\circ\varphi}(G_b)$ for any extended
pure inner twist $G_b$ of $G$. Therefore $\Pi_\varphi^{\circ,\vee}$
is a subset of $\Pi_{C\circ\varphi}$. On the other hand, we will
show that $\Pi^\sigma_\varphi(G_b)=\Pi_{\delta\circ\varphi}(G_b)$
for any extended pure inner twist $G_b$ coming from $H$ (see
Proposition \ref{prop. packet of galois conjugate}). Thus
$\Pi_\varphi^{\circ,\sigma}$ is a subset of
$\Pi_{\delta\circ\varphi}$. Hence Theorem \ref{thm. intro.
distinction and packets 2} below (also see Corollary \ref{cor.
distinction and packets}) implies Theorem \ref{thm. intro.
distinction and packets}, and offers positive evidence to Conjecture
\ref{conj. prasad's main conjecture}. The proof of Theorem \ref{thm.
intro. distinction and packets 2} is a direct consequence of Theorem
\ref{thm. distinction and parameters} and the description of
$^LH^\op$.

\begin{thm}\label{thm. intro. distinction and packets 2}
If $\pi$ is $\omega_{H_a}$-distinguished, we have
\begin{enumerate}
\item $\Pi_{C\circ\varphi}=\Pi_{\delta\circ\varphi}$.
\item The parameter $\varphi$ factors through $^LH^\op$.
\end{enumerate}
\end{thm}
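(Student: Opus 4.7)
The plan is to derive both assertions from Theorem~\ref{thm. distinction and parameters}, which translates the distinction hypothesis into an explicit relation between the refined Langlands parameters of $\pi^\vee$ and $\pi^\sigma$. The starting observation is that by Theorem~\ref{thm. theorem on representation} we have $\pi^\vee\simeq\pi^\sigma$. By Kaletha's identification $\Pi_\varphi^\vee(G_a)=\Pi_{C\circ\varphi}(G_a)$, the representation $\pi^\vee$ lies in the $L$-packet of $C\circ\varphi$ on $G_a$; by Proposition~\ref{prop. packet of galois conjugate} applied to the extended pure inner twist $G_a$ (which comes from $H_a$), the representation $\pi^\sigma$ lies in the $L$-packet of $\delta\circ\varphi$ on $G_a$. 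Since an irreducible representation belongs to a unique $L$-packet, the parameters $C\circ\varphi$ and $\delta\circ\varphi$ must be $\wh{G}$-conjugate.

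Assertion (1) is then immediate: enlarged $L$-packets depend only on the $\wh{G}$-conjugacy class of the parameter, so $\Pi_{C\circ\varphi}=\Pi_{\delta\circ\varphi}$. For assertion (2), one needs the equivariant refinement of this conjugacy provided by Theorem~\ref{thm. distinction and parameters}. Recall from \S\ref{subsec. consequences} that $\wh{G}\simeq\wh{H}\times\wh{H}$ with $W_F$ acting through $\Gal(E/F)$ by swapping the factors (composed with the Galois action on $\wh{H}$), and that $^LH^\op\subset{}^LG$ is obtained from $\wh{H}$ by twisting the Galois action via the Chevalley involution. A parameter $\varphi:W_F\to{}^LG$ then factors through $^LH^\op$ precisely when its two components are intertwined by $C$ in a $W_F$-equivariant way, which amounts to saying that $C\circ\varphi$ and $\delta\circ\varphi$ are conjugate by an element of $\wh{G}$ compatibly with the $W_F$-structure on $\varphi$. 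This compatibility is exactly what Theorem~\ref{thm. distinction and parameters} supplies, since it tracks not only the parameters but also the behavior of the associated characters of $C_\varphi$ under contragredient and Galois conjugation.

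The main obstacle is not in the deduction of Theorem~\ref{thm. intro. distinction and packets 2} itself, which is essentially bookkeeping once Theorem~\ref{thm. distinction and parameters} is in hand, but rather in the proof of the latter: one must establish that the representation-level equivalence $\pi^\vee\simeq\pi^\sigma$ furnished by Theorem~\ref{thm. theorem on representation} upgrades to an equivariant identification of refined parameters. This requires a detailed analysis of how the enlarged correspondence $\iota$ interacts with the operations of contragredient and Galois conjugation on the explicit construction of regular depth-zero supercuspidal $L$-packets, and a careful comparison of the resulting characters on $C_\varphi$.
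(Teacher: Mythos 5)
Your proposal reaches the right conclusion by essentially the same route the paper takes: both assertions are deduced from the $\wh{G}$-conjugacy $C\circ\varphi\sim\delta\circ\varphi$, which is obtained by combining Theorem~\ref{thm. theorem on representation} with Kaletha's identification $\Pi^\vee_\varphi(G_a)=\Pi_{C\circ\varphi}(G_a)$ and Proposition~\ref{prop. packet of galois conjugate}. Assertion (1) is then a one-line consequence, and assertion (2) follows by writing $\varphi_0=(\varphi_1,\varphi_2)$, observing that $C\circ\varphi\sim\delta\circ\varphi$ forces $\varphi_2\sim C\circ\varphi_1$, and recalling the description $^LH^\op\simeq\{(h,C(h)):h\in\wh{H}\}\rtimes W_F\subset{}^LG$.

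One point of your write-up is misleading, though. You claim that assertion (2) requires the \emph{refined} content of Theorem~\ref{thm. distinction and parameters} --- specifically, the compatibility of the characters $\mu$ of $C_\varphi$ under contragredient and Galois conjugation --- because one needs the conjugating element to be ``compatible with the $W_F$-structure.'' That compatibility is already built into the notion ``$C\circ\varphi\sim\delta\circ\varphi$'' (which, as the paper sets up in \S\ref{subsec. consequences}, means conjugacy of $L$-homomorphisms by a single element of $\wh{G}$), and you have already obtained precisely this from the uniqueness of $L$-packets in your first paragraph. The $\mu$-tracking in Theorem~\ref{thm. distinction and parameters} plays no role in either assertion of the present theorem; it is recorded for the finer, character-level part of Prasad's conjecture. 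In the paper's actual argument (Corollary~\ref{cor. distinction and packets}), only the parameter-level equivalence is used. So the extra structure you invoke is not needed, but its invocation does not invalidate the argument.
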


\paragraph{Organization of this article.} Conjecture \ref{conj. prasad's main
conjecture} only involves representations of quasi-split groups,
while our treatment deals with more general reductive groups. We
recall the local Langlands conjectures for quasi-split groups in
\S\ref{sec. notation}, and the enlarged local Langlands
correspondence for extended pure inner twists in \S\ref{subsec.
construction of L-packets} where we only focus on regular depth-zero
supercuspidal representations. In the subsequent sections
\S\S\ref{subsec. L-packets of contragredient}--\ref{subsec.
L-packets of galois conjugate}, we show the relationship between
actions (i.e. contragredient and Galois conjugate) on the
representations with actions (i.e. involutions $C$ and $\delta$) on
the parameters. Main theorems of the paper and their proof are given
in \S\ref{sec. distinction}. The proof of Theorem \ref{thm. theorem
on representation} relies on Proposition \ref{prop. stable torus}
whose proof involves Hakim-Murnaghan theory for distinguished tame
supercuspidal representations and the theory for distinguished
representations over finite fields. Theorems \ref{thm. intro.
distinction and packets} and \ref{thm. intro. distinction and
packets 2} follow from Theorem \ref{thm. distinction and parameters}
whose proof relies on Theorem \ref{thm. theorem on representation}
and Proposition \ref{prop. correspondence for galois conjugate}.

\paragraph{Notation and conventions.}
Let $F$ be a $p$-adic field (i.e. a finite extension of $\BQ_p$)
with ring of integers $O_F$ and residue field $k_F$. For the
construction of regular supercuspidal $L$-packets, we assume that
$p$ is odd. Let $E$ be an unramified quadratic field extension of
$F$ with ring of integers $O_E$ and residue field field $k_E$. We
write $W_F$ and $W_E$ for the Weil groups of $F$ and $E$
respectively, and write $I_F$ for the inertia subgroup of $W_F$.
Denote by $\sigma$ the nontrivial automorphism in $\Gal(E/F)$. Fix
an algebraic closure $\bar{F}$ such that $E\subset\bar{F}$, and
denote by $\Gamma$ the absolute Galois group $\Gal(\bar{F}/F)$. Let
$F^u$ be the maximal unramified extension of $F$ in $\bar{F}$.

If $H$ is an algebraic group over $F$, we use $\R_{E/F}H$ to denote
its Weil restriction attached to the extension $E/F$. Thus
$\R_{E/F}H$ is an algebraic group over $F$ whose group of
$F$-rational points is $H(E)$. The $F$-rational structure of $H$
gives rise to an action of $\Gal(E/F)$ on $\R_{E/F}H$. Denote by
$\theta$ the involution on $\R_{E/F}H$ induced by $\sigma$. If $W$
is a subgroup of $\R_{E/F}H$, by abuse of notation, we denote the
image of $W$ under $\theta$ by $W^\sigma$, and denote by $W^\theta$
the subgroup of fixed points. If $(\pi,V_\pi)$ is a representation
of $H(E)$ where $V_\pi$ is the underlying space of $\pi$, the Galois
conjugate $\pi^\sigma$ of $\pi$ is a representation of $H(E)$ with
underlying space $V_\pi$, defined by $\pi^\sigma(g)v=\pi(g^\sigma)v$
for $v\in V_\pi$. We will use similar notation when we discuss
objects over finite fields.

For a connected reductive group $G$ over $F$, we denote by
$\CB^\red(G,F)$ the reduced Bruhat-Tits building of $G(F)$. For any
$x\in\CB^\red(G,F)$, we write $G(F)_{x,0}$ for the parahoric
subgroup corresponding to $x$, $G(F)_{x,0+}$ for its pro-unipotent
radical, and $\mathsf{G}$ for the corresponding connected reductive
group over $k_F$. For any unramified maximal torus $S$ of $G$, we
denote the intersection $\CA^\red(S,F^u)\cap\CB^\red(G,F)$ by
$\CA^\red(S,F)$, where $\CA^\red(S,F^u)$ is the reduced apartment of
$S$ in $\CB^\red(G,F^u)$.

If $A$ is a group, we use $A^D$ to denote $\Hom(A,\BC^\times)$, and
use $\Irr(A)$ to denote the set of equivalence classes of
irreducible representations of $A$. If $A$ is a topological group,
we use $\pi_0(A)$ to denote its group of connected components.

\paragraph{Acknowledgements.}
This work was partially supported by NSFC 11501033. The author
thanks Dipendra Prasad and Wen-Wei Li for their valuable comments on
earlier versions of this paper. He also thanks the anonymous
referees for the careful reading and  helpful suggestions to improve
the paper.

\section{$L$-packets}\label{sec. L-packets}
\subsection{Local Langlands conjectures}\label{sec.
notation}

There are several versions of the local Langlands conjectures, that
is, the version for quasi-split groups, the enlarged version for
pure inner twists of quasi-split groups, and the recent version for
extended pure inner twists and rigid inner twists of quasi-split
groups. We refer to \cite{ka15} for an excellent survey on the
relation between these versions of conjectures. In this subsection,
we briefly review the local Langlands conjecture for quasi-split
groups.

Let $G$ be a connected reductive group (not necessary be
quasi-split) defined over $F$, $\wh{G}$ the complex Langlands dual
group of $G$, and $^LG=\wh{G}\rtimes W_F$ the Weil-form $L$-group of
$G$. The Langlands parameters (considered in this paper) are
continuous homomorphisms
$$\varphi:W_F\ra{^LG}$$ which are sections of the natural projection
${^LG}\ra W_F$.  For each $L$-parameter $\varphi$ up to
$\wh{G}$-conjugate, the basic form of the local Langlands conjecture
predicts the existence of a finite set $\Pi_\varphi(G)$, called an
$L$-packet, of equivalence classes of irreducible admissible
representations of $G(F)$. If two parameters $\varphi$ and
$\varphi'$ are not $\wh{G}$-conjugate, the packets
$\Pi_{\varphi}(G)$ and $\Pi_{\varphi'}(G)$ should be disjoint. Each
irreducible admissible representation $\pi$ of $G(F)$ should belong
to a unique packet.

Now we suppose further that $G$ is quasi-split. The refined local
Langlands conjecture says that we can parameterize representations
in $\Pi_\varphi(G)$ in terms of the information of the parameter
$\varphi$. To be more precise, let $Z_\varphi$ be the centralizer of
$\varphi$ in $\wh{G}$, and $Z(\wh{G})$ the center of $\wh{G}$. Then
there should exist a bijective map
\begin{equation}\label{equ. correspondence for quasi-split}
\iota:\Irr\left(\pi_0\left(Z_\varphi/Z(\wh{G})^\Gamma\right)\right)
\lra\Pi_\varphi(G),\end{equation} and this map is unique in the
following sense. Recall that a Whittaker datum for $G$ is a
$G(F)$-conjugacy class of pairs $(B,\psi)$, where $B$ is a Borel
subgroup of $G$ defined over $F$ with unipotent radical $U$, and
$\psi$ is a non-degenerate character $U(F)\ra\BC^\times$. Given a
Whittaker datum $(B,\psi)$, an admissible representation $\pi$ is
called $(B,\psi)$-generic if $\Hom_{U(F)}(\pi,\psi)\neq0$. Fix a
Whittaker datum $(B,\psi)$. The map $\iota$ should be unique in the
sense that it sends the trivial representation in
$\Irr\left(\pi_0\left(Z_\varphi/Z(\wh{G})^\Gamma\right)\right)$ to
the conjecturally unique $(B,\psi)$-generic representation in
$\Pi_\varphi(G)$, and satisfies the endoscopic character identities
(which we will not review). We denote by $\iota_{B,\psi}$ this
conjectural map.

\subsection{Regular depth-zero supercuspidal $L$-packets}
\label{subsec. construction of L-packets}
\paragraph{Extended pure inner twists.}
To parameterize representations of non-quasi-split groups, one has
to consider all inner twists of the quasi-split group $G$, whose
isomorphism classes are parameterized by $H^1(F,G_\ad)$ where
$G_\ad$ is the adjoint group of $G$. However, inner twists are not
rigid enough to ensure the existence of a bijection as (\ref{equ.
correspondence for quasi-split}), since the automorphism group of an
inner twist is bigger than its inner automorphism group. Vogan
introduces the notion of pure inner twists to rigidify inner twists.
The remaining problem is that not all of the inner twists can be
rigidified to be pure. To include more inner twists which can be
rigidified, based on his work on isocrystals with additional
structure, Kottwitz introduces the notion of extended pure inner
twists. We adopt the formulation of extended pure inner twists in
\cite[\S2]{ka14} and refer to \cite{kot} for a complete
introduction.

Let $L$ be the completion of $F^u$, and $\bar{L}$ a fixed algebraic
closure of $L$ such that $\bar{F}\subset\bar{L}$. There exists a
subset $Z^1(W_F,G(\bar{L}))_\bsc$ of {\em basic} 1-cocycles in
$Z^1(W_F,G(\bar{L}))$. The set of cohomology classes of
$Z^1(W_F,G(\bar{L}))_\bsc$ is denoted by $\FB(G)_\bsc$. The
definition of $Z^1(W_F,G(\bar{L}))_\bsc$ implies that there are
natural maps
$$Z^1(W_F,G(\bar{L}))_\bsc\lra Z^1(F,G_\ad),
\quad\textrm{and}\quad \FB(G)_\bsc\lra H^1(F,G_\ad).$$ An {\em
extended pure inner twist} of $G$ is a pair $(\xi,z)$ where
$\xi:G\ra G'$ is an inner twist and $z$ is in
$Z^1(W_F,G(\bar{L}))_\bsc$ such that the image of $z$ in
$Z^1(F,G_\ad)$ is the cocycle $\gamma\mapsto \xi^{-1}\gamma(\xi)$.
The map $(\xi,z)\mapsto z$ establishes a bijection between the set
of isomorphism classes of pure inner twists of $G$ and the set
$\FB(G)_\bsc$.

\paragraph{Regular depth-zero supercuspidal $L$-packet data.}
From now on, let $G$ be a connected unramified reductive group over
$F$. Let $\varphi:W_F\ra{^LG}$ be a Langlands parameter and
$\varphi_0$ its projection to $\wh{G}$. Recall that $\varphi$ is
called a {\em TRSELP} if the restriction of $\varphi_0$ to the wild
inertia subgroup of $W_F$ is trivial, the centralizer of
$\varphi_0(I_F)$ in $\wh{G}$ is a maximal torus, and the index of
$Z(\wh{G})^\Gamma$ in $Z_\varphi$ is finite. The notion TRSELP is
the abbreviation of ``tame regular semisimple elliptic Langlands
parameter'', which was first introduced in \cite[page 825]{dr09}.

Now let $\varphi$ be a TRSELP. Choose a hyperspecial vertex $o$ in
$\mathcal{B}^\red(G,F)$. The vertex $o$ determines an
$O_F$-structure on $G$. A {\em regular depth-zero supercuspidal
$L$-packet datum} associated with $\varphi$ is a quadruple
$$\left(S,{^Lj},a,\chi\right)$$ such that
\begin{itemize}
\item $S$ is an unramified elliptic maximal torus of $G$ defined over $O_F$,
\item ${^Lj}$ is an unramified $L$-embedding ${^LS\ra{^LG}}$,
\item $a$ is a Langlands parameter $W_F\ra {^LS}$
satisfying
$${^Lj}\circ a=\varphi,$$
\item $\chi: S(F)\lra\BC^\times$ is the character attached to the
parameter $a$ by the local Langlands correspondence for
tori.\end{itemize} The conditions on $\varphi$ implies that $\chi$
is regular. The choices of regular depth-zero supercuspidal
$L$-packet data depend on the vertex $o$, and are unique up to
$G(O_F)$-conjugate (\cite[Lemma 3.4.1]{ka14}) after fixing a vertex.

\paragraph{Regular depth-zero supercuspidal $L$-packets.}
Let $\varphi$ be a TRSELP, and $\left(S,{^Lj},a,\chi\right)$ a
regular depth-zero supercuspidal $L$-packet datum attached to
$\varphi$. Now we review the construction of the enlarged $L$-packet
$\Pi_{\varphi}$. For each $b\in\FB(G)_\bsc$, denote by $G_b$ a
representative in the isomorphism class of the extended pure inner
twists corresponding to $b$. The packet $\Pi_\varphi$ is defined to
be a disjoint union of the $L$-packets $\Pi_\varphi(G_b)$ where $b$
runs over $\FB(G)_\bsc$ and $\Pi_{\varphi}(G_b)$ consists of certain
depth-zero supercuspidal representations of $G_b(F)$. The
construction of $\Pi_\varphi(G_b)$ is as follows.

First let us recall how every element $\mu$ of $Z_{\varphi}^D$ gives
rise to a cocycle $z_\mu$ in $Z^1(W_F,G(\bar{L}))_\bsc$  (cf.
\cite[\S3.3]{ka14}). The map ${^Lj}$ induces a bijection
\begin{equation}\label{equ. bijection cocharacters}
Z^D_{\varphi}\lra X_*(S)_\Gamma,\quad
\mu\mapsto\bar{\lambda}_\mu.\end{equation} Choose a lift
$\lambda_\mu$ of $\bar{\lambda}_\mu$ in $X_*(S)$ arbitrarily.  The
choice of $\lambda_\mu$ does not matter (see \cite[Lemma
3.3.2]{ka14}). The assignment $\Phi\mapsto\lambda_\mu(\varpi)$
extends to a 1-cocycle $W_F\ra S(\bar{L})$ and its prolongation to
$G(\bar{L})$ is $z_\mu$, where $\Phi\in W_F$ is the inverse of a
fixed Frobenius automorphism and $\varpi\in O_F$ is a fixed
uniformizer. Let $\xi_\mu:G\ra G_\mu$ be the inner twist determined
by $z_\mu$. Denote by
$$\mathsf{b}: Z_{\varphi}^D\lra\FB(G)_\bsc$$ the map induced by $\mu\mapsto
z_\mu$. Actually the map $\mathsf{b}$ is the composition of the
natural map $Z_\varphi^D\ra\left(Z(\wh{G})^\Gamma\right)^D$ and the
isomorphism
$$\left(Z(\wh{G})^\Gamma\right)^D\stackrel{\sim}{\lra}
\FB(G)_\bsc.$$

Next for each $\mu\in Z_{\varphi}^D$, consider the embedding of $S$
into $G_\mu$
$$j_\mu:S\lra G\stackrel{\xi_\mu}{\lra} G_\mu,$$ which is in fact an
admissible embedding defined over $F$. Denote by $S_\mu$ the image
of $S$ in $G_\mu$. Then $S_\mu$ is an elliptic unramified maximal
torus of $G_\mu$, and isomorphic to $S$ over $F$. Set
$\chi_\mu=\chi\circ j_\mu^{-1}$, which is a character of $S_\mu(F)$.
Based on $(S_\mu,\chi_\mu)$, we can construct an irreducible
depth-zero supercuspidal representation
$\pi\left(S_\mu,\chi_\mu\right)$ of $G_\mu(F)$. The condition on
$\varphi$ implies that $\chi$ and thus $\chi_\mu$ are of depth zero
and regular. Let $x=\CA^\red(S_\mu,F)$, which is a vertex in
$\CB^\red(G_\mu,F)$ since $S_\mu$ is unramified and elliptic (cf.
\cite[\S2.2]{de06}). Let $\mathsf{G}_\mu$ be the corresponding
connected reductive group over $k_F$ associated to $x$, and
$\mathsf{S}_\mu$ the maximal torus in $\mathsf{G}_\mu$ corresponding
to $S_\mu$. The character $\chi_\mu$ gives rise to a regular
character $\overline{\chi_\mu}:\mathsf{S}_\mu(k_F)\ra\BC^\times$,
and thus an irreducible cuspidal representation
$\overline{\rho_\mu}$ of $\mathsf{G}_\mu(k_F)$ by Deligne-Lusztig
theory. Denote by $\rho_\mu$ the inflation of $\overline{\rho_\mu}$
to $G_\mu(F)_{x,0}$. Set
$$\pi(S_\mu,\chi_\mu)=\ind^{G_\mu(F)}_{Z(F)G_\mu(F)_{x,0}}(\rho_\mu\otimes\chi_\mu),$$
where $Z$ is the center of $G_\mu$, and the induction is the compact
induction.

Lastly, set
\begin{equation}\label{equ. definition of single L-packet}\Pi_{\varphi}(G_b)
=\left\{\pi\left(S_\mu,\chi_\mu\right):\ \mu\in Z_{\varphi}^D\
\textrm{such that} \ \mathsf{b}(\mu)=b\right\},\end{equation} and
set
\begin{equation}\label{equ. definition of L-packet}
\Pi_{\varphi}=\bigsqcup\limits_{b\in\FB(G)_\bsc}\Pi_{\varphi}(G_b).
\end{equation}

\paragraph{Enlarged local Langlands correspondence.}
According to the definition of $\Pi_{\varphi}$ (\ref{equ. definition
of L-packet}), we have a bijection
$$\iota: Z_{\varphi}^D\lra\Pi_{\varphi},\quad
\mu\mapsto\pi(S_\mu,\chi_\mu),$$ which depends only on the
$G(F)$-orbit of the chosen hyperspecial vertex $o$. Moreover the
image $\pi(S,\chi)$ of the trivial character under $\iota$ is
generic. More precisely, let $(B,\psi)$ be a Whittaker datum which
satisfies that the reduced apartment of the maximal torus of $B$
contains $o$, and $\psi:U(F)\ra\BC^\times$ reduces to a generic
character of $U(k_F)$.  Then $\pi(S,\chi)$ is $(B,\psi)$-generic
\cite[Lemma 6.2.1]{dr09}. We will denote $\iota$ by $\iota_{B,\psi}$
to indicate the base point. As \cite[Theorem 4.3.3]{ka14} shows, the
parametrization $\iota_{B,\psi}$ makes the $L$-packet $\Pi_\varphi$
satisfying the endoscopic character identities. In summary we have
the following commutative diagram
$$\begin{diagram}
Z_{\varphi}^D&&\rTo^{\iota_{B,\psi}}&\bigsqcup_{b \in\FB(G)_\bsc}
\Pi_{\varphi}(G_b)\\
\dTo&&&\dTo\\
\left(Z(\wh{G})^\Gamma\right)^D&&\rTo&\FB(G)_\bsc\\
\end{diagram}$$

\subsection{$L$-packets of the contragredient}\label{subsec. L-packets of
contragredient}

For a general connected reductive group $G$ over $F$ and a Langlands
parameter $\varphi$ for $G$, Adams and Vogan \cite{av16} conjecture
that the contragredient $\Pi_\varphi^\vee(G)$ of its $L$-packet
$\Pi_{\varphi}(G)$, defined by
$$\Pi^\vee_{\varphi}(G)=\left\{\pi^\vee:\ \pi\in\Pi_{\varphi}(G)\right\},$$
is also an $L$-packet whose Langlands parameter is the composition
of $\varphi$ with the Chevalley involution of $^LG$. Moreover,
Prasad \cite{pr1} and Kaletha \cite{ka13} conjecture that the
refined Langlands parameter for each representation $\pi^\vee$ in
$\Pi_\varphi^\vee(G)$ can be explicitly determined. When $G$ is
unramified and $\varphi$ is a TRSELP, we briefly review the enlarged
local Langlands correspondence for $\Pi^\vee_\varphi$ (defined in
the same way as $\Pi^\vee_\varphi(G)$), which is shown in
\cite[\S5]{ka13}.

\paragraph{Chevalley involution.} Let $G$ be quasi-split. To define a Chevalley
involution, fix an $F$-splitting $(T,B,\{X_\alpha\})$ (see
\cite[\S1.3]{kot84} for the definition of splitting). The Chevelley
involution $C$ attached to this splitting is the unique involution
on $G$ defined over $F$ such that the restriction of $C$ to the
maximal torus $T$ is the inverse map, $C(B)=B^\op$ where $B^\op$ is
the opposite of the Borel group $B$, and $C(X_\alpha)=X_{-\alpha}$.
Chevalley involutions attached to different $F$-splittings are all
$G(\bar{F})$-conjugate. We refer to \cite[\S2]{av16} for more
information about the Chevalley involution.

Similarly, fixing an $F$-splitting
$(\wh{T},\wh{B},\{X_{\wh{\alpha}}\})$ for the complex dual group
$\wh{G}$, we have a unique Chevelley involution $\wh{C}$ which
commutes with the action of $\Gamma$ on $\wh{G}$. Thus we obtain an
$L$-automorphism $^LC=\wh{C}\times\id_{W_F}$ of $^LG$. For
simplicity, we will use $C$ to denote $\wh{C}$ or $^LC$ if there is
no confusion.

\paragraph{Langlands parameter.}
Now suppose that $G$ is unramified and $\varphi$ is a TRSELP for
$G$. Let $\left(S,{^Lj},a,\chi\right)$ be a regular depth-zero
supercuspidal $L$-packet datum for $\varphi$. Choose an
$F$-splitting $(\wh{T},\wh{B},\{X_{\wh{\alpha}}\})$ for $\wh{G}$ so
that $\wh{T}={^Lj(\wh{S})}$, and let $C$ be the Chevalley involution
on $^LG$ with respect to this splitting. Then the contragredient
$\Pi^\vee_\varphi$ of the enlarged $L$-packet $\Pi_\varphi$ is the
enlarged $L$-packet associated with the Langlands parameter
$C\circ\varphi$. In other words, we have
$$\Pi^\vee_{\varphi}=\Pi_{C\circ\varphi}.$$
The reason is as follows. The parameter $C\circ\varphi$ is a TRSELP,
whose regular depth-zero supercuspidal $L$-packet datum can be
chosen to be $\left(S,{^Lj},a^{-1},\chi^{-1}\right)$ where $a^{-1}$
is the Langlands parameter corresponding to $\chi^{-1}$. Note that
$Z_{\varphi}=Z_{C\circ\varphi}$. For each $\mu\in Z_{\varphi}^D$,
which is also viewed as an element in $Z_{C\circ\varphi}^D$, we have
\begin{equation}\label{equ. induction and contragredient}
\pi\left(S_\mu,\chi_\mu\right)^\vee=\pi\left(S_\mu,\chi_\mu^{-1}\right)
=\pi\left(S_\mu,(\chi^{-1})_\mu\right).\end{equation} Also note
that, since $\pi\left(S,\chi\right)$ is $(B,\psi)$-generic,
$\pi\left(S,\chi\right)^\vee$ is $(B,\psi^{-1})$-generic. Therefore,
the relation between the refined local Langlands correspondence for
$$\iota_{B,\psi^{-1}}: Z_{C\circ\varphi}^D\lra\Pi_{C\circ\varphi}$$ and
$$\iota_{B,\psi}: Z_{\varphi}^D\lra\Pi_{\varphi}$$ is
\begin{equation}\label{equ. correspondence for contragredient}
\iota_{B,\psi^{-1}}(C\circ\varphi,\mu)
=\iota_{B,\psi}(\varphi,\mu)^\vee.\end{equation}

\subsection{$L$-packets of the Galois conjugate}\label{subsec. L-packets of galois
conjugate} From now on, until the end of the paper, let $H$ be an
unramified connected reductive group over $F$, $E$ an unramified
quadratic field extension of $F$, and $G=\R_{E/F}H$ which is also
unramified over $F$.

\paragraph{Galois conjugate.}
The restriction map
$$ H^1\left(W_F,
H(\bar{L})\right){\lra} H^1\left(W_E,H(\bar{L})\right)$$ gives rise
to
$$\res: \FB(H)_\bsc{\lra}
\FB(G)_\bsc.$$  Denote by $\FB(G)_\bsc^\circ$ the image of
$\FB(H)_\bsc$ under the restriction map. Note that $b$ lies in
$\FB(G)_\bsc^\circ$ if and only if there exists $a\in\FB(H)_\bsc$
such that $G_b=\R_{E/F}H_a$, and the inner twist $\xi_b:G\ra G_b$ is
induced from the inner twist $\xi_a:H\ra H_a$. The reason that we
introduce the notion $\FB(G)_\bsc^\circ$ is that we want to consider
the Galois action of $\Gal(E/F)$ on $G_b$. For
$b\in\FB(G)_\bsc^\circ$ and any $a\in\FB(H)_\bsc$ such that
$\res(a)=b$, the $F$-structure of $H_a$ induces an action, denoted
by $\sigma_a$, of $\sigma$ on $G_b$. Moreover, if there are two
elements $a$ and $a'$ of $\FB(H)_\bsc$ such that
$\res(a)=\res(a')=b$, then there exists $g\in G_b(F)$ such that
$\sigma_a=\Ad(g)\circ\sigma_{a'}$. Therefore for any representation
$\pi$ of $G_b(F)$ we have $\pi^{\sigma_a}\simeq\pi^{\sigma_{a'}}$.
Hence the notion $\pi^\sigma$ is well-defined for equivalence
classes of representations of $G_b(F)$ where
$b\in\FB(G)^\circ_\bsc$.

Let $\varphi:W_F\ra{^LG}$ be a TRSELP. We consider a subset
$Z_{\varphi}^{D,\circ}$ of $Z_{\varphi}^D$, which is defined to be
$$Z_\varphi^{D,\circ}=\left\{\mu\in Z_\varphi^D:\
\mathsf{b}(\mu)\in\FB(G)_\bsc^\circ\right\}.$$  Set
$$\Pi^\circ_{\varphi}=\bigsqcup\limits_{b\in\FB(G)^\circ_\bsc}
\Pi_{\varphi}(G_b).$$ For each $b\in\FB(G)_\bsc^\circ$, define
$$\Pi_\varphi^\sigma(G_b)=\left\{\pi^\sigma:\
\pi\in\Pi_{\varphi}(G_b)\right\}.$$ Set
$$\Pi^{\circ,\sigma}_{\varphi}=\bigsqcup_{b\in\FB(G)^\circ_\bsc}
\Pi^{\sigma}_{\varphi}(G_b).$$

\paragraph{Langlands parameter.}
We identify $^LG$ with $\left(\wh{H}\times\wh{H}\right)\rtimes W_F$.
The involution $\theta$ on $G$ induces an $L$-automorphism $\delta$
on ${^LG}$:
$$\delta(x,y,w)=(y,x,w),\quad \textrm{for}\ x,y\in\wh{H},\ w\in W_F.$$ Then the
composition $\delta\circ\varphi$ is also a TRSELP. Define a subset
$Z_{\delta\circ\varphi}^{D,\circ}$ of $Z^D_{\delta\circ\varphi}$ to
be $$\left\{\mu\in Z_{\delta\circ\varphi}^D:\
\mathsf{b}(\mu)\in\FB(G)_\bsc^\circ\right\},$$ and set
$$\Pi^\circ_{\delta\circ\varphi}=\bigsqcup\limits_{b\in\FB(G)^\circ_\bsc}
\Pi_{\delta\circ\varphi}(G_b).$$ The rest of this section is devoted
to proving the following proposition as well as a more precise
statement (Proposition \ref{prop. correspondence for galois
conjugate}), which basically shows that
$\Pi_\varphi^{\circ,\sigma}(G_b)$ for $b\in\FB(G)^\circ_\bsc$ is an
$L$-packet whose Langlands parameter is $\delta\circ\varphi$. The
arguments are analogous to those in \cite[\S5]{ka13}, which we have
recalled in \S\ref{subsec. L-packets of contragredient}.

\begin{prop}\label{prop. packet of galois conjugate}
We have
$$\Pi_{\varphi}^{\circ,\sigma}=\Pi_{\delta\circ\varphi}^\circ.$$
\end{prop}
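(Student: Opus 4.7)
The plan is to mimic the argument recalled in \S\ref{subsec. L-packets of contragredient} for the contragredient, with the swap involution $\delta$ playing the role of the Chevalley involution $C$. Since $\delta$ is an unramified $L$-automorphism of ${^LG}$ commuting with the $W_F$-action, the composition $\delta\circ\varphi$ is again a TRSELP, and a regular depth-zero supercuspidal $L$-packet datum for it can be obtained from the datum $(S,{^Lj},a,\chi)$ for $\varphi$ simply by twisting the $L$-embedding, producing $(S,\delta\circ{^Lj},a,\chi)$.

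Next I would identify the parametrizing sets. Because $\delta$ is an $L$-automorphism, conjugation by $\delta$ induces an isomorphism $Z_\varphi\cong Z_{\delta\circ\varphi}$ intertwining the assignment $\mathsf{b}$ on the two sides, and hence restricts to a bijection $Z_\varphi^{D,\circ}\cong Z_{\delta\circ\varphi}^{D,\circ}$ compatible with the groupings by $\FB(G)_\bsc^\circ$. For $\mu\in Z_\varphi^{D,\circ}$ with $b=\mathsf{b}(\mu)\in\FB(G)_\bsc^\circ$, unwinding the construction of \S\ref{subsec. construction of L-packets} with the new datum: the admissible embedding $j'_\mu:S\hookrightarrow G_b$ produced from $\delta\circ{^Lj}$ is (up to conjugation in $G_b(F)$) the composition $\theta\circ j_\mu$, so the image torus is $\theta(S_\mu)$ and the new character is $\chi_\mu\circ\theta^{-1}$. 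Thus the representation in $\Pi_{\delta\circ\varphi}(G_b)$ associated with the image of $\mu$ is
$$\pi\bigl(\theta(S_\mu),\chi_\mu\circ\theta^{-1}\bigr).$$

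The remaining, and technically most delicate, step is the representation-theoretic identity
$$\pi(S_\mu,\chi_\mu)^\sigma=\pi\bigl(\theta(S_\mu),\chi_\mu\circ\theta^{-1}\bigr)$$
of representations of $G_b(F)$. This follows by inspecting the compact induction formula $\pi(S_\mu,\chi_\mu)=\ind^{G_b(F)}_{Z(F)G_b(F)_{x,0}}(\rho_\mu\otimes\chi_\mu)$: the vertex $x=\CA^\red(S_\mu,F)$ is sent by $\sigma_a$ to $\CA^\red(\theta(S_\mu),F)$, the Deligne-Lusztig cuspidal $\rho_\mu$ attached to the regular character $\overline{\chi_\mu}$ of $\mathsf{S}_\mu(k_F)$ is sent to the Deligne-Lusztig cuspidal attached to $\overline{\chi_\mu}\circ\sigma_a^{-1}$, and $\chi_\mu$ becomes $\chi_\mu\circ\sigma_a^{-1}$. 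Since $\sigma_a$ and $\theta$ differ only by a $G_b(F)$-conjugation, these data match those defining $\pi(\theta(S_\mu),\chi_\mu\circ\theta^{-1})$ up to the usual equivalence of compactly induced representations. Taking the disjoint union over $b\in\FB(G)_\bsc^\circ$ then yields $\Pi_\varphi^{\circ,\sigma}=\Pi_{\delta\circ\varphi}^\circ$. The main obstacle is the bookkeeping in the second step, namely verifying that the admissible embedding coming from the twisted $L$-embedding $\delta\circ{^Lj}$ is indeed $\theta\circ j_\mu$; once that compatibility between $\delta$ on the dual side and $\theta$ on the group side is pinned down, the rest reduces to a direct computation on the explicit construction of regular depth-zero supercuspidals.
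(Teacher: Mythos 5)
Your high-level architecture matches the paper's: twist the $L$-packet datum by $\delta$, track the twist through Kaletha's construction of $\pi(S_\mu,\chi_\mu)$, and finish with a compatibility between Galois conjugation and compact induction (the paper isolates this last step as Lemma~\ref{lem. galois conjugate and induction}). You also correctly anticipate the resulting data $(\theta(S_\mu),\chi_\mu\circ\theta^{-1}) = (S_\mu^\sigma,\chi_\mu^\sigma)$ and the bijection $\mu\mapsto\mu\circ\delta^{-1}$ between $Z_\varphi^{D,\circ}$ and $Z_{\delta\circ\varphi}^{D,\circ}$. However, there is a substantive gap in the middle, precisely at the point you flag as ``technically most delicate.''

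The datum you propose for $\delta\circ\varphi$ is $(S,\delta\circ{^Lj},a,\chi)$, i.e.\ only the $L$-embedding is twisted while $S$, $a$, $\chi$ are left unchanged. The paper instead uses $(S^\sigma,{^Lj^\delta},a^\sigma,\chi^\sigma)$ with ${^Lj^\delta}=\delta\circ{^Lj}\circ\delta^{-1}$: all four entries are twisted. This difference is not cosmetic. If one keeps $S$ and uses $\delta\circ{^Lj}$, then when you pull $\mu'=\mu\circ\delta^{-1}$ back through $\delta\circ{^Lj}$ to form the element $\bar\lambda_{\mu'}\in X_*(S)_\Gamma$ as in (\ref{equ. bijection cocharacters}), the two copies of $\delta$ cancel and you recover exactly $\bar\lambda_\mu$; the resulting cocycle and inner twist then coincide literally with $z_\mu$ and $\xi_\mu$, so the admissible embedding is $j_\mu$ (image $S_\mu$, character $\chi_\mu$), not $\theta\circ j_\mu$. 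Thus your unverified claim that the embedding produced from $\delta\circ{^Lj}$ is $\theta\circ j_\mu$ does not follow from your choice of datum, and the construction would collapse to $\Pi_{\delta\circ\varphi}=\Pi_\varphi$. The paper avoids this by shifting the torus to $S^\sigma$: there the map on cocharacter lattices is $\bar\lambda\mapsto\overline{\sigma\circ\lambda}$, which gives $z_{\mu\circ\delta^{-1}}=\sigma\circ z_\mu$ (a different, though cohomologous, cocycle) and $\xi_{\mu\circ\delta^{-1}}=\sigma\circ\xi_\mu\circ\sigma$, whence the image torus is genuinely $(S_\mu)^\sigma$. You also omit the requirement, used implicitly throughout, that the hyperspecial vertex $o$ and the Borel $B$ be chosen $\sigma$-stable. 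To repair your argument you should replace $(S,\delta\circ{^Lj},a,\chi)$ by $(S^\sigma,{^Lj^\delta},a^\sigma,\chi^\sigma)$ and carry out the cocycle computation explicitly; the remainder of your sketch (the compact-induction step and the disjoint union over $b\in\FB(G)_\bsc^\circ$) is then fine.
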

\begin{proof}
Recall that, to construct the bijection $\iota_{B,\psi}:
Z_\varphi^D\ra\Pi_\varphi$, we need to fix a hyperspecial vertex
$o\in\CB^\red(G,F)$ and a Whittaker datum $(B,\psi)$. Now we require
that $o$ and $B$ (and thus $U$) are $\sigma$-stable.

First we choose a regular depth-zero supercuspidal $L$-packet datum
$\left(S,{^Lj},a,\chi\right)$ for $\varphi$. Then $S^\sigma$ is also
an unramified elliptic maximal torus of $G$ defined over $O_F$, and
$\chi^\sigma=\chi\circ\sigma$ is a character of $S^\sigma(F)$. Let
$a^\sigma: W_F\ra {^L(S^\sigma)}$ be the Langlands parameter
associated with $\chi^\sigma$. Denote by $\left({^LS}\right)^\delta$
the image of ${^LS}$ under $\delta$ in ${^LG}$. According to the
construction of $L$-groups with respect to restriction of scalars,
we can identify ${^L(S^\sigma)}$ with $\left({^LS}\right)^\delta$,
and we have
$$a^\sigma=\delta\circ a.$$ Therefore, we have the following
commutative diagram \begin{diagram}
{^L(S^\sigma)}&&\rTo^{{^Lj^\delta}}&{^LG}\\
\uTo^\delta&&&\uTo^\delta\\
{^LS}&&\rTo^{^Lj}&{^LG}\\
\uTo^{a}&&&\uTo^\varphi\\
W_F&&\rTo^=&W_F\\
\end{diagram}
where $$^Lj^\delta:=\delta\circ{^Lj}\circ\delta^{-1}$$ is also an
unramified $L$-embedding. In summary, we obtain that
$\left(S^\sigma,{^Lj^\delta},a^\sigma,\chi^\sigma\right)$ is a
regular depth-zero supercuspidal $L$-packet datum for the parameter
$\delta\circ\varphi$.

Each character $\mu\in Z_{\varphi}^D$ induces a character
$\mu\circ\delta^{-1}$ in $Z_{\delta\circ\varphi}^D$. Now we discuss
the relation between $(S^\sigma)_{\mu\circ\delta^{-1}}$ and $S_\mu$,
and the relation between $(\chi^\sigma)_{\mu\circ\delta^{-1}}$ and
$\chi_\mu$. First we have the following commutative diagram
\begin{diagram}
Z^D_{\varphi}&&\rTo&X_*(S)_\Gamma\\
\dTo&&&\dTo\\
Z^D_{\delta\circ\varphi}&&\rTo& X_*(S^\sigma)_\Gamma\\
\end{diagram} where the top and the bottom maps are
$\mu\mapsto\bar{\lambda}_\mu$ in (\ref{equ. bijection
cocharacters}), and the right map is
$\bar{\lambda}\mapsto\overline{\sigma\circ\lambda}$. It is easy to
check that the 1-cocycles $z_\mu$ and $z_{\mu\circ\delta^{-1}}$ in
$Z^1(W_F,G(\bar{L}))$, and the inner twists $\xi_\mu$ and
$\xi_{\mu\circ\delta^{-1}}$, which are determined by $\mu$ and
$\mu\circ\delta^{-1}$ respectively, satisfy the relation
$$z_{\mu\circ\delta^{-1}}=\sigma\circ z_\mu,\quad
\xi_{\mu\circ\delta^{-1}}=\sigma\circ\xi_\mu\circ\sigma.$$ Thus the
admissible embedding $j_{\mu\circ\delta^{-1}}$ of $S^\sigma$
attached to $\mu\circ\delta^{-1}$ is
$$S^\sigma\incl
G\stackrel{{\xi_{\mu\circ\delta^{-1}}}}{\lra}(G_\mu)^\sigma.$$ Hence
we see that if $\mu$ is in $Z_{\varphi}^{D,\circ}$ then
$\mu\circ\delta^{-1}$ is in $Z_{\delta\circ\varphi}^{D,\circ}$ and
$$\mathsf{b}(\mu\circ\delta^{-1})=\mathsf{b}(\mu).$$ Moreover we
have
$$(S^\sigma)_{\mu\circ\delta^{-1}}=(S_\mu)^\sigma,\quad
(\chi^\sigma)_{\mu\circ\delta^{-1}}=(\chi_\mu)^\sigma.$$ Therefore,
\begin{equation}\label{equ. relation between representations}
\pi\left((S^\sigma)_{\mu\circ\delta^{-1}},(\chi^\sigma)_{\mu\circ\delta^{-1}}\right)=
\pi\left((S_\mu)^\sigma,(\chi_\mu)^\sigma\right)
\end{equation}
From now on, we simply denote $$S_\mu^\sigma=(S_\mu)^\sigma,\quad
\chi_\mu^\sigma=(\chi_\mu)^\sigma.$$ In summary, according to
(\ref{equ. relation between representations}), we have shown that
$$\Pi_{\delta\circ\varphi}^\circ
=\left\{\pi\left(S_\mu^\sigma,\chi_\mu^\sigma\right):\
\mu\in Z_{\varphi}^{D,\circ}\right\}.$$ Hence Proposition \ref{prop.
packet of galois conjugate} follows from the lemma below directly.
\end{proof}

\begin{lem}\label{lem. galois conjugate and induction}
For any $\mu\in Z_{\varphi}^{D,\circ}$, we have
$$\pi\left(S_\mu^\sigma,\chi_\mu^\sigma\right)\simeq\pi\left(S_\mu,\chi_\mu\right)^\sigma.$$
\end{lem}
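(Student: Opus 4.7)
The strategy is to track how each ingredient of the compact induction formula
\[
\pi(S_\mu, \chi_\mu) = \ind^{G_\mu(F)}_{Z(F) G_\mu(F)_{x,0}}(\rho_\mu \otimes \chi_\mu), \qquad x = \CA^\red(S_\mu, F),
\]
transforms under the Galois involution. Since $\mu \in Z_\varphi^{D,\circ}$, the class $b = \mathsf{b}(\mu)$ lies in $\FB(G)_\bsc^\circ$, so $\sigma$ does descend to an $F$-automorphism of $G_\mu$, acting on $G_\mu(F)$ and on $\CB^\red(G_\mu, F)$. Setting $y := \sigma(x)$, compatibility with apartments forces $y = \CA^\red(S_\mu^\sigma, F)$, which is precisely the vertex that enters into the construction of $\pi(S_\mu^\sigma, \chi_\mu^\sigma)$.

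I would first record the general compact induction identity: for any $F$-automorphism $\sigma$ of $G_\mu(F)$, any open subgroup $K$, and any smooth representation $(\tau,V)$ of $K$, the map $f \mapsto f \circ \sigma$ furnishes an isomorphism
\[
\ind^{G_\mu(F)}_{\sigma(K)}(\tau^\sigma) \xrightarrow{\sim} \bigl(\ind^{G_\mu(F)}_K \tau\bigr)^\sigma,
\]
where $\tau^\sigma$ denotes the representation of $\sigma(K)$ defined by $\tau^\sigma(k) v = \tau(\sigma(k)) v$. Applying this with $K = Z(F) G_\mu(F)_{x,0}$ and $\tau = \rho_\mu \otimes \chi_\mu$, and using $\sigma(Z(F)) = Z(F)$ together with the building-theoretic equality $\sigma(G_\mu(F)_{x,0}) = G_\mu(F)_{y,0}$, I expect to obtain
\[
\pi(S_\mu, \chi_\mu)^\sigma \simeq \ind^{G_\mu(F)}_{Z(F) G_\mu(F)_{y,0}}(\rho_\mu^\sigma \otimes \chi_\mu^\sigma).
\]

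The only remaining task is to identify $\rho_\mu^\sigma$ with the inflation of the Deligne-Lusztig cuspidal representation attached to $(\mathsf{S}_\mu^\sigma, \overline{\chi_\mu^\sigma})$ on the reductive quotient at $y$. Because $\sigma$ restricts to an $O_F$-isomorphism between the parahoric $O_F$-group schemes at $x$ and $y$, it induces an isomorphism $\bar{\sigma}$ of the associated connected reductive groups over $k_F$ that carries the torus corresponding to $S_\mu^\sigma$ to the one corresponding to $S_\mu$ and identifies $\overline{\chi_\mu^\sigma}$ with $\overline{\chi_\mu}$. The functoriality of Deligne-Lusztig induction under such isomorphisms then delivers the required identification, and after tensoring with $\chi_\mu^\sigma$ we match the inducing datum of $\pi(S_\mu^\sigma, \chi_\mu^\sigma)$.

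No single step is genuinely hard; the whole argument is formal functoriality, so the main obstacle is purely one of bookkeeping — keeping straight which ``$\sigma$'' refers to the $F$-automorphism of the group and which to the conjugation functor on representations, and confirming that each structural piece (center, parahoric, pro-unipotent radical, reductive quotient, Deligne-Lusztig induction) transports functorially under $\sigma$. In essence, the content is the naturality of the construction $\pi(S, \chi)$ in the pair $(S, \chi)$.
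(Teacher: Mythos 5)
Your proposal is correct and follows essentially the same route as the paper: record the compatibility of compact induction with the automorphism $\sigma$ via $f\mapsto f\circ\sigma$, identify $\sigma(G_\mu(F)_{x,0})$ with the parahoric at $\sigma(x)=\CA^\red(S_\mu^\sigma,F)$, and match the inducing data. You spell out the Deligne--Lusztig functoriality step that the paper dismisses as ``straightforward'' (namely $\rho_{\mu,\sigma}\simeq\rho_\mu^\sigma$); just be careful with the direction of the induced isomorphism $\bar\sigma$ on reductive quotients --- it carries the torus corresponding to $S_\mu$ to the one corresponding to $S_\mu^\sigma$, not the reverse as written, though since $\sigma$ is an involution this is only a bookkeeping slip.
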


\begin{proof}
For any open subgroup $K$ of $G_b(F)$ which is compact modulo the
center, and any representation $(\rho,V_\rho)$ of $K$, we have
\begin{equation}\label{equ. induction and galois conjugate}
\left(\ind_K^{G_b(F)}\rho\right)^\sigma\simeq
\ind_{K^\sigma}^{G_b(F)}\left(\rho^\sigma\right),\end{equation}
where $K^\sigma$ is the image of $K$ under $\sigma$, and
$\left(\rho^\sigma,V_{\rho^\sigma}\right)=(\rho\circ\sigma,V_\rho)$
is the representation of $K^\sigma$. The intertwining operator is
given by
$$i:f\longmapsto f\circ\sigma,$$ where $f:G_b(F)\ra V_\rho$ is
any function in the space of
$\left(\ind_K^{G_b(F)}\rho\right)^\sigma$. Let $G_b(F)_{x,0}$ be the
parahoric subgroup corresponding to $S_\mu$. Then we have
$$G_b(F)^\sigma_{x,0}=G_b(F)_{\sigma(x),0},$$  where $G_b(F)_{\sigma(x),0}$
is the parahoric subgroup corresponding to $S_\mu^\sigma$. Let
$\rho_\mu$ be the representation of $G_b(F)_{x,0}$ constructed via
$(S_\mu,\chi_\mu)$, and $\rho_{\mu,\sigma}$ the representation of
$G_b(F)_{\sigma(x),0}$ constructed via
$(S_\mu^\sigma,\chi_\mu^\sigma)$. It is straightforward that
$\rho_{\mu,\sigma}\simeq\rho^\sigma_\mu$, which implies the lemma by
(\ref{equ. induction and galois conjugate}).
\end{proof}

Since $\pi\left(S,\chi\right)$ is $(B,\psi)$-generic, the
representation $\pi\left(S^\sigma,\chi^\sigma\right)$ is
$(B,\psi^\sigma)$-generic. Hence it makes sense to  denote both of
the two bijective maps
$$Z^D_{\delta\circ\varphi}\lra\Pi_{\delta\circ\varphi}\quad
\textrm{and}\quad
Z^{D,\circ}_{\delta\circ\varphi}\lra\Pi^\circ_{\delta\circ\varphi}$$
by $\iota_{B,\psi^\sigma}$. The following proposition, which is
finer than Proposition \ref{prop. packet of galois conjugate}, is a
direct consequence of (\ref{equ. relation between representations})
and Lemma \ref{lem. galois conjugate and induction}.

\begin{prop}\label{prop. correspondence for galois conjugate}
For any $\mu\in Z_{\varphi}^{D,\circ}$, we have
$$\iota_{B,\psi}\left(\varphi,\mu\right)^\sigma
=\iota_{B,\psi^\sigma}\left(\delta\circ\varphi,\mu\circ\delta^{-1}\right).$$
\end{prop}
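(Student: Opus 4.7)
The plan is to observe that all the ingredients for this refined statement have already been collected in the proof of Proposition~\ref{prop. packet of galois conjugate}; what remains is to match the parametrizations on the two sides. First I would unpack the right-hand side. The quadruple $\left(S^\sigma, {}^Lj^\delta, a^\sigma, \chi^\sigma\right)$ was shown to be a regular depth-zero supercuspidal $L$-packet datum for $\delta\circ\varphi$, and the commutative diagram for the bijection (\ref{equ. bijection cocharacters}) shows that $\mu\mapsto \mu\circ\delta^{-1}$ corresponds to $\bar\lambda\mapsto\overline{\sigma\circ\lambda}$ on cocharacters. By construction of the enlarged local Langlands correspondence attached to this datum, the image of $\mu\circ\delta^{-1}$ is precisely $\pi\bigl((S^\sigma)_{\mu\circ\delta^{-1}},(\chi^\sigma)_{\mu\circ\delta^{-1}}\bigr)$, which by (\ref{equ. relation between representations}) equals $\pi(S_\mu^\sigma,\chi_\mu^\sigma)$. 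Applying Lemma~\ref{lem. galois conjugate and induction} rewrites this as $\pi(S_\mu,\chi_\mu)^\sigma = \iota_{B,\psi}(\varphi,\mu)^\sigma$, which is the left-hand side.

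Second, I would justify that the symbol $\iota_{B,\psi^\sigma}$ really denotes the canonical enlarged correspondence for $\delta\circ\varphi$ based at the Whittaker datum $(B,\psi^\sigma)$. Here I use that $o$ and $B$ were chosen $\sigma$-stable, so $\psi^\sigma$ is again a generic character on the same unipotent radical $U(F)$. Since $\pi(S,\chi)$ is $(B,\psi)$-generic, applying $\sigma$ yields that $\pi(S,\chi)^\sigma$ is $(B,\psi^\sigma)$-generic; by Lemma~\ref{lem. galois conjugate and induction} applied to the trivial character, $\pi(S^\sigma,\chi^\sigma)$ is then $(B,\psi^\sigma)$-generic. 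This is exactly the base-point condition characterizing $\iota_{B,\psi^\sigma}$ on the enlarged $L$-packet $\Pi_{\delta\circ\varphi}$, so the right-hand side is unambiguously the one indicated.

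The main (and only) piece of actual content is the functorial compatibility of the cocycle construction $\mu\mapsto z_\mu$ with $\delta$: what makes the indexing work out is the identity $z_{\mu\circ\delta^{-1}}=\sigma\circ z_\mu$, together with the resulting relation $\xi_{\mu\circ\delta^{-1}}=\sigma\circ\xi_\mu\circ\sigma$ among inner twists. Both equalities were already verified in the course of proving Proposition~\ref{prop. packet of galois conjugate}; they are what forces the Galois conjugate of $\iota_{B,\psi}(\varphi,\mu)$ to be indexed by $\mu\circ\delta^{-1}$ and not by some other element of $Z_{\delta\circ\varphi}^D$. I do not anticipate any real obstacle beyond writing down these identifications in sequence.
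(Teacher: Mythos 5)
Your proposal is correct and follows essentially the same route as the paper: the paper likewise observes that $\pi(S^\sigma,\chi^\sigma)$ is $(B,\psi^\sigma)$-generic (which fixes the base point), and then declares the proposition a direct consequence of (\ref{equ. relation between representations}) and Lemma~\ref{lem. galois conjugate and induction}, exactly the chain you unwind. You simply make the dependence on the cocycle identity $z_{\mu\circ\delta^{-1}}=\sigma\circ z_\mu$ more explicit than the paper's terse ``direct consequence'' phrasing.
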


\section{Distinction}\label{sec. distinction}

\subsection{Finite fields}\label{subsec. finite fields}
In this subsection, we recall a result on the problem of distinction
of representations for groups over finite fields. Let $\mathsf{H}$
be a connected reductive group over a finite field $k$, and $k'$ a
quadratic field extension of $k'$. Denote
$\mathsf{G}=\R_{k'/k}\mathsf{H}$. Then the Frobenius map $\F$
corresponding to the $k$-structure of $\mathsf{H}$ defines an
involution $\theta$ on $\mathsf{G}$. Let $\mathsf{S}$ be an elliptic
maximal torus of $\mathsf{G}$ over $k$, and
$\chi:\mathsf{S}(k)\ra\BC^\times$ a regular character. Then, by
Deligne-Lusztig theory, there is an irreducible cuspidal
representation $\rho$ of $\mathsf{G}(k)$ associated to
$(\mathsf{S},\chi)$. The following lemma is a special case of
\cite[Lemma 2.2]{lu00}.
\begin{lem}\label{lem. finite field}
The space $\Hom_{\mathsf{H}(k)}(\rho,1)$ is nonzero if and only if
there exists $g\in\mathsf{G}(k)$ such that
$\mathsf{S}^g=g\mathsf{S}g^{-1}$ is an $\F$-stable torus, and
$$\chi^g|_{\mathsf{S}^g(k)^\theta}=1$$ where
$\chi^g=\chi\circ\Ad(g^{-1})$.
\end{lem}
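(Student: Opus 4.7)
The plan is to reduce the statement directly to Lusztig's criterion for distinction of Deligne--Lusztig representations on symmetric pairs over finite fields. First I would check that the setup falls within the framework of \cite{lu00}: the involution $\theta$ on $\mathsf{G}=\R_{k'/k}\mathsf{H}$ induced by the $k$-structure of $\mathsf{H}$ has fixed-point subgroup $\mathsf{G}^\theta=\mathsf{H}$ under the evident identification, so that $(\mathsf{G},\mathsf{H})$ is a symmetric pair of the kind treated there. Moreover, because $\theta$ coincides with the nontrivial element of $\Gal(k'/k)$ acting on $\mathsf{G}(k)=\mathsf{H}(k')$, the condition of $\theta$-stability of a maximal torus coincides with its $\F$-stability in the sense of the statement.

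Next I would translate the representation-theoretic side. Since $\chi$ is regular and $\mathsf{S}$ is elliptic, there is a sign $\epsilon=\pm1$ such that $\epsilon R^{\mathsf{G}}_{\mathsf{S}}(\chi)$ equals the irreducible cuspidal representation $\rho$. Hence evaluating $\dim\Hom_{\mathsf{H}(k)}(\rho,1)$ reduces, via Frobenius reciprocity, to computing
\[
\langle R^{\mathsf{G}}_{\mathsf{S}}(\chi)\vert_{\mathsf{H}(k)},\,1_{\mathsf{H}(k)}\rangle_{\mathsf{H}(k)}
= \frac{1}{|\mathsf{H}(k)|}\sum_{h\in\mathsf{H}(k)}\tr R^{\mathsf{G}}_{\mathsf{S}}(\chi)(h),
\]
up to this sign.

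The core computation, carried out in \cite[Lemma 2.2]{lu00}, inserts the Deligne--Lusztig character formula for $\tr R^{\mathsf{G}}_{\mathsf{S}}(\chi)(h)$ and swaps the order of summation. After grouping by $\mathsf{G}(k)$-conjugates $\mathsf{S}^g$ of $\mathsf{S}$ that are $\F$-stable and applying an orbit-stabilizer argument, the inner sum collapses, for each such $g$, to a pairing of $\chi^g|_{\mathsf{S}^g(k)^\theta}$ with the trivial character of the finite abelian group $\mathsf{S}^g(k)^\theta$. By orthogonality of characters on $\mathsf{S}^g(k)^\theta$, this pairing is nonzero exactly when $\chi^g|_{\mathsf{S}^g(k)^\theta}=1$, yielding the stated criterion.

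The hard part is the sign bookkeeping: under the regularity assumption on $\chi$, one must verify that distinct contributions from inequivalent $g$ cannot cancel, so that nonvanishing of the total sum is equivalent to the existence of at least one $g$ satisfying the two conditions. This is precisely what is established in Lusztig's argument, which I would invoke rather than reproduce, noting only that the specialization of his hypotheses to our regular, elliptic $(\mathsf{S},\chi)$ is immediate.
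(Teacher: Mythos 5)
Your proposal is correct and follows essentially the same route as the paper: the paper simply records this lemma as a special case of Lusztig's Lemma~2.2, noting only that the connected-center and simplicity hypotheses of Lusztig's main theorem are unnecessary for that particular lemma, without spelling out the character-formula computation you outline.
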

The main theorem of \cite{lu00} assumes that the center $\mathsf{Z}$
of $\mathsf{G}$ is connected and $\mathsf{G}/\mathsf{Z}$ is simple.
For \cite[Lemma 2.2]{lu00}, this assumption is not needed.

\subsection{Proof of Theorem \ref{thm. theorem on
representation}}\label{subsec. proof}

Let $\varphi:W_F\ra{^LG}$ be a TRSELP, and $\Pi_{\varphi}$ the
corresponding enlarged $L$-packet. Fix a character $\mu\in
Z_{\varphi}^{D,\circ}$, and let $b=\mathsf{b}(\mu)$. Choose
$a\in\FB(H)_\bsc$ such that $\res(a)=b$. Only in this subsection, by
abuse of notation, we denote
$$\left(\pi,S,\chi,G,H\right)=\left(\pi\left(S_\mu,\chi_\mu\right)
,S_\mu,\chi_\mu,G_b,H_a\right)$$ for simplicity. The following
proposition, which is analogous to Lemma \ref{lem. finite field}, is
the key point for the proof of Theorem \ref{thm. theorem on
representation}.

\begin{prop}\label{prop. stable torus}
Suppose that $\pi$ is $H(F)$-distinguished. Then there exists $g\in
G(F)$ such that $S^g$ is $\theta$-stable and
$$\chi^g|_{S^g(F)^\theta}=1.$$
\end{prop}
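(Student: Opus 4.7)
My plan is to combine the Mackey decomposition of $\Hom_{H(F)}(\pi,1)$ arising from the compact induction presentation of $\pi$ with the Hakim–Murnaghan analysis of distinguished depth-zero types, and then extract the character relation from the finite-field input of Lemma \ref{lem. finite field}. Since $\pi = \ind_K^{G(F)}(\rho_\mu \otimes \chi_\mu)$ with $K = Z(F)G(F)_{x,0}$, Frobenius reciprocity yields
$$\Hom_{H(F)}(\pi,1) \;\cong\; \prod_{g \in H(F)\backslash G(F)/K} \Hom_{H(F) \cap gKg^{-1}}\!\bigl({}^g(\rho_\mu \otimes \chi_\mu),\,1\bigr),$$
so the distinction hypothesis produces some $g_0 \in G(F)$ for which the corresponding factor is nonzero.

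The first substantive step is to adjust $g_0$ within its double coset so that the parahoric $g_0 K g_0^{-1}$ is $\theta$-stable, equivalently so that the vertex $g_0 \cdot x$ is a $\theta$-fixed point of $\CB^\red(G,F)$. This is where Hakim–Murnaghan theory intervenes: the cuspidality of $\rho_\mu$ at the reductive-quotient level, together with the rigidity of depth-zero types on the building, forces the relevant parahoric to be $\theta$-stable whenever the Hom factor is nonzero. Once this is arranged, the reductive quotient $\mathsf{G}$ of $G(F)_{g_0 x,0}$ is identified with $\R_{k_E/k_F}\mathsf{H}$ carrying the induced involution $\bar\theta$, and $\mathsf{H}(k_F) = \mathsf{G}(k_F)^{\bar\theta}$ is the image of $H(F) \cap g_0 K g_0^{-1}$.

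Reducing the surviving Hom modulo the pro-unipotent radical translates the $p$-adic distinction into a distinction statement for the Deligne–Lusztig cuspidal $\overline{{}^{g_0}\rho_\mu}$ of $\mathsf{G}(k_F)$ attached to $(\mathsf{S}^{g_0}, \overline{\chi_\mu^{g_0}})$, namely
$$\Hom_{\mathsf{H}(k_F)}\!\bigl(\overline{{}^{g_0}\rho_\mu},\,1\bigr) \;\neq\; 0.$$
Lemma \ref{lem. finite field} then furnishes $\bar h \in \mathsf{G}(k_F)$ such that $\mathsf{S}^{g_0 \bar h}$ is $\bar\theta$-stable and $\overline{\chi_\mu^{g_0 \bar h}}$ is trivial on $\mathsf{S}^{g_0 \bar h}(k_F)^{\bar\theta}$. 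Lifting $\bar h$ to some $h \in G(F)_{g_0 x,0}$ and setting $g = g_0 h$, rigidity of unramified tori (they are determined by their reductions) promotes $\bar\theta$-stability of $\mathsf{S}^g$ to $\theta$-stability of $S^g$ as a torus of $G$ over $F$. The identity $\chi^g|_{S^g(F)^\theta} = 1$ follows from the finite-field identity on the maximal compact subgroup of $S^g(F)^\theta$ (using that $\chi$ is of depth zero), combined with the observation that the central character of the $H(F)$-distinguished representation $\pi$ is automatically trivial on $Z(F)^\theta \subset H(F)$.

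I expect the main obstacle to be the first step — guaranteeing that the representative $g_0$ can be modified so that $g_0 K g_0^{-1}$ is genuinely $\theta$-stable. This does not follow formally from Mackey theory and requires Hakim–Murnaghan-style fixed-point analysis on the Bruhat–Tits building in the symmetric-space setting. A secondary difficulty is the careful tracking of $\theta$-invariants on the torus across the reduction map so that the triviality of $\overline{\chi_\mu^g}$ on $\mathsf{S}^g(k_F)^{\bar\theta}$ lifts cleanly to triviality of $\chi^g$ on the full group $S^g(F)^\theta$, using depth-zero structure on the compact part and the distinction-forced central character condition on the noncompact part.
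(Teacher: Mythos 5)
Your overall strategy (Mackey decomposition, Hakim--Murnaghan, Lemma \ref{lem. finite field}, lifting via rigidity of unramified tori, ellipticity plus the central character for the noncompact part of the torus) matches the paper's skeleton. But there is a genuine gap in the first substantive step, and its consequences propagate through the rest of the argument.

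You claim that Hakim--Murnaghan theory lets you adjust $g_0$ so that $g_0Kg_0^{-1}$ is $\theta$-stable, and that the reduction of $\theta$ to the resulting finite reductive quotient is the Frobenius $\F$ with fixed points $\mathsf{H}(k_F)$. That is not what Hakim--Murnaghan's Theorem 5.26 gives. What it gives (and what the paper uses) is that one can assume $\theta(x)=x$, so that $\wt K=G(F)_x$, $K$, and $G(F)_{x,0}$ are $\theta$-stable, and then the decomposition runs over representatives $g_i$ of $\wt K\backslash G(F)/H(F)$ satisfying $\tau(g_i):=g_i\theta(g_i)^{-1}\in\wt K$, with each summand being $\Hom_{\wt K^{\theta_i}}(\wt\rho,1)$ for the \emph{twisted} involution $\theta_i=\Ad(\tau(g_i)^{-1})\circ\theta$. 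The condition $\tau(g_i)\in\wt K$ is not equivalent to $g_iKg_i^{-1}$ (nor $g_i^{-1}Kg_i$) being $\theta$-stable in a way that lets you dodge the twist, and cuspidality/rigidity of depth-zero types do not upgrade it to that. As a result, the involution that descends to $\mathsf{G}$ is $\bar\theta_i$, not $\bar\theta$, and its fixed-point group is in general a different $k_F$-form $\mathsf{H}_i$ of $\mathsf{H}$, not $\mathsf{H}(k_F)$. The paper's crucial technical step, which your proposal omits entirely, is the verification that $\bar\theta_i$ is actually a Frobenius map $\F_i$ satisfying $\F_i^2=\F^2$ (computed by lifting to $\sH_x(O_{F^u})$); this is precisely what makes Lemma \ref{lem. finite field} applicable to $(\mathsf{G},\mathsf{H}_i,\F_i)$.

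Because your argument works with $\theta$ rather than $\theta_i$, you also skip the final bookkeeping step: after \cite[Lemma A.2]{hj12} produces a $\theta_i$-stable unramified elliptic torus $S'$ with $\chi'$ trivial on $S'(F)^{\theta_i}$, one still has to conjugate by $g'=\theta(g_i)^{-1}$ to convert $\theta_i$-stability into genuine $\theta$-stability (a direct computation shows $\Ad(\theta(g_i)^{-1})\circ\theta_i\circ\Ad(\theta(g_i))=\theta$), and then $g=g'g_0$ is the element in the statement. The closing observations in your proposal about depth zero on $S(O_F)$ and the central character on $Z'(F)$ are correct in spirit and match the paper's use of $S'(F)^{\theta_i}=Z'(F)S'(O_F)^{\theta_i}$, but they are applied to the wrong involution unless the twist is tracked throughout.
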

\begin{proof}
Let $x=\mathcal{A}^\red(S,F)$, which a vertex in
$\mathcal{B}^\red(G,F)$. Write $K$ for $Z(F)G(F)_{x,0}$, and
$\wt{K}$ for $G(F)_x$ which is the stabilizer of $x$ for the action
of $G(F)$ on $\mathcal{B}^\red(G,F)$. It is known that $\wt{K}$ is
the normalizer of $K$ in $G(F)$ and $K$ is of finite index in
$\wt{K}$.

Since $\pi$ is $H(F)$-distinguished, applying Hakim-Murnaghan theory
(cf. \cite[Theorem 5.26]{hm08}), we can and do assume that
$\theta(x)=x$, which implies that $\wt{K}$, $K$ and $G(F)_{x,0}$ are
all $\theta$-stable. Therefore the vertex $x$ lies in
$\CB^\red(H,F)$. Denote by $\sH_x$ the canonical connected smooth
group scheme over $O_F$, which is an integral model of $H$, such
that $\sH_x(O_F)=H(F)_{x,0}$. Since $E$ is unramified over $F$, we
have $\sH_x(O_E)=G(F)_{x,0}$. Let $\mathsf{G}$ and $\mathsf{H}$ be
the corresponding connected reductive groups over $k_F$. Then
$\mathsf{G}=\R_{k_E/k_F}\mathsf{H}$. The involution $\theta$ acts on
$G(F)_{x,0}$, and induces an involution $\bar{\theta}$ on
$\mathsf{G}$. Moreover the involution $\bar{\theta}$ is the
Frobenius map $\F$ with respect to the $k_F$-structure of
$\mathsf{H}$.

By abuse of notation, we use $\rho$ to denote both of the
representations $\rho_\mu\otimes\chi_\mu$ of $K$ and $\rho_\mu$ of
$G(F)_{x,0}$. Let $\wt{\rho}=\ind_K^{\wt{K}}\rho$, which is an
irreducible representation of $\wt{K}$. According to \cite[Theorem
5.26]{hm08}, we have
$$\Hom_{H(F)}(\pi,1)\simeq\bigoplus_{g_i}\Hom_{\wt{K}\cap
G(F)^{\theta_i}}(\wt{\rho},1)=\bigoplus_{g_i}\Hom_{\wt{K}^{\theta_i}}(\wt{\rho},1),$$
where $g_i$ runs over representatives of the double cosets
$\wt{K}\bs G(F)/H(F)$ such that $\tau(g_i)\in \wt{K}$ (which does
not depend on the choice of the representatives), and $\theta_i$'s
are the $g_i$-twisted involutions defined by
$\theta_i(y)=\tau(g_i)^{-1}\theta(y)\tau(g_i)$. Recall that
$\tau(g)$ is defined to be $g\theta(g)^{-1}$. Applying Mackey
theory, we have a finer decomposition
$$\Hom_{H(F)}(\pi,1)\simeq\bigoplus_{g_i}\Hom_{K^{\theta_i}}(\rho,1)$$
where each $g_i\in K\bs G(F)/H(F)$ satisfies $\tau(g_i)\in\wt{K}$.

For each $i$, write $x_i=\tau(g_i)$. Then
$\theta_i=\Ad(x_i^{-1})\circ\theta$. Denote by $\bar{\theta}_i$ the
involution on $\mathsf{G}(k_F)=\mathsf{H}(k_E)$ induced by
$\theta_i$. Now we want to show that $\bar{\theta}_i$ is actually a
Frobenius map on $\mathsf{H}(\bar{k}_F)$. For any
$\bar{y}\in\mathsf{H}(\bar{k}_F)$ let $y\in\sH_x(O_{F^u})$ be a lift
of $y$. We view $\theta$ as the automorphism on $\sH_x(F^u)$ induced
by the Frobenius map in $\Gal(F^u/F)$. Thus we have
$$\bar{\theta}_i^2(\bar{y})=\overline{x_i^{-1}\theta\left(x_i^{-1}\theta(y)x_i\right)
x_i}=\overline{\theta^2(y)}=\F^2(\bar{y}).$$ Therefore, for each
$i$, $\bar{\theta}_i$ is a Frobenius map. Denote $\bar{\theta}_i$ by
$\F_i$ for simplicity. Then
$\mathsf{G}(k_F)=\mathsf{H}({\bar{k}_F})^{\F_i^2}$. Let
$\mathsf{H}_i$ be the connected reductive group over $k_F$ such that
$\mathsf{H}_i(k_F)=\mathsf{H}({\bar{k}_F})^{\F_i}$.

For each $i$, if $\Hom_{K^{\theta_i}}(\rho,1)$ is nonzero then
$\Hom_{G(F)_{x,0}^{\theta_i}}(\rho,1)$ is nonzero. Note that
$$\Hom_{G(F)_{x,0}^{\theta_i}}(\rho,1)=\Hom_{\mathsf{H}_i(k_F)}(\bar{\rho},1).$$
Since $\Hom_{H(F)}(\pi,1)$ is nonzero, there exits an index $i$ such
that $\Hom_{\mathsf{H}_i(k_F)}(\bar{\rho},1)$ is nonzero. As before,
let $\mathsf{S}$ be the elliptic maximal torus in $\mathsf{G}$
corresponding to $S$, and $\bar{\chi}$ the character of
$\mathsf{S}(k_F)$ induced by $\chi$. According to Lemma \ref{lem.
finite field}, there exists an $\F_i$-stable torus $\mathsf{S}'$ in
$\mathsf{G}$ and a character $\bar{\chi}'$ of $\mathsf{S}'(k_F)$
satisfying
$$\bar{\chi}'|_{\mathsf{S'}(k_F)^{\F_i}}=1$$ such that
$(\mathsf{S},\bar{\chi})$ and $(\mathsf{S}',\bar{\chi}')$ are
$\mathsf{G}(k_F)$-conjugate.

By \cite[Lemma A.2]{hj12}, there exists a $\theta_i$-stable elliptic
unramified maximal torus $S'$ of $G$ such that $x=\CA^\red(S',F)$
and the image of $S'(F^u)\cap G(F^u)_{x,0}$ in
$\mathsf{G}(\bar{k}_F)$ is $\mathsf{S}'(\bar{k}_F)$. Since
$\mathsf{S}$ and $\mathsf{S}'$ are $\mathsf{G}(k_F)$-conjugate, $S$
and $S'$ are $G(F)_{x,0}$-conjugate. Choose $g_0\in G(F)_{x,0}$ such
that $S'=S^{g_0}$. Let $\chi'=\chi^{g_0}$. It is easy to check that
$\overline{\chi'}=\bar{\chi}'$.  Therefore
$\overline{\chi'}|_{\mathsf{S'}(k_F)^{\F_i}}=1$ and thus
$$\chi'|_{S'(O_F)^{\theta_i}}=1.$$ Let $Z'$ be the center of $H$. The
central character of $\pi$ is $\chi|_{Z(F)}$, which is equal to
$\chi'|_{Z(F)}$. Since $\pi$ is $H(F)$-distinguished, we have
$$\chi'|_{Z'(F)}=1.$$ Since $S'$ is elliptic
and unramified, we have $S'(F)^{\theta_i}=Z'(F)S'(O_F)^{\theta_i}$.
Therefore $$\chi'|_{S'(F)^{\theta_i}}=1.$$ Recall that
$\theta_i=\Ad(x_i^{-1})\circ\theta$ with $x_i=\tau(g_i)$. Let
$g'=\theta(g_i)^{-1}$. It is obvious that $S''=S'^{g'}$ is
$\theta$-stable and $S''(F)^\theta=g'S'(F)^{\theta_i}g'^{-1}$. Let
$\chi''=\chi'^{g'}$. Then $$\chi''|_{S''(F)^\theta}=1.$$ In summary,
if we set $g=g'g_0$ then $g$ satisfies the desired conditions in
Proposition \ref{prop. stable torus}.
\end{proof}

\begin{prop}\label{prop. distinction for trivial}
If $\pi$ is $H(F)$-distinguished, we have
$$\pi^\vee\simeq\pi^\sigma.$$
\end{prop}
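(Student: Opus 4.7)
My plan is to combine Proposition \ref{prop. stable torus} with two bookkeeping identifications that appear earlier: equation (\ref{equ. induction and contragredient}) gives $\pi^\vee \simeq \pi(S,\chi^{-1})$, and Lemma \ref{lem. galois conjugate and induction} gives $\pi^\sigma \simeq \pi(S^\sigma,\chi^\sigma)$. Since $\pi(\cdot,\cdot)$ depends only on the $G(F)$-conjugacy class of its defining pair (conjugation by $g \in G(F)$ carries the compact induction to an isomorphic representation), the claim reduces to showing that $(S,\chi^{-1})$ and $(S^\sigma,\chi^\sigma)$ are $G(F)$-conjugate.

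First I would invoke Proposition \ref{prop. stable torus} to produce $g \in G(F)$ such that $T := S^g$ is $\theta$-stable and $\eta := \chi^g$ satisfies $\eta|_{T^\theta(F)}=1$. The crux is then to upgrade this vanishing on the fixed subgroup to the identity $\eta^\sigma = \eta^{-1}$ on all of $T(F)$. For any $t\in T(F)$, the element $t\sigma(t)$ lies in $T^\theta(F)$: since $T$ is commutative and $\theta$-stable, $\sigma(t\sigma(t)) = \sigma(t)\sigma^2(t) = \sigma(t)t = t\sigma(t)$. Evaluating $\eta$ gives
$$1 \;=\; \eta\bigl(t\sigma(t)\bigr) \;=\; \eta(t)\,\eta^\sigma(t),$$
so $\eta^\sigma = \eta^{-1}$ identically on $T(F)$.

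Finally I would chain together the conjugacies. Conjugation by $g$ establishes $(S,\chi^{-1}) \sim_{G(F)} (T,\eta^{-1})$, and conjugation by $\sigma(g) \in G(F)$ establishes $(S^\sigma,\chi^\sigma) \sim_{G(F)} (T^\sigma,\eta^\sigma)$. Because $T^\sigma = T$ and $\eta^\sigma = \eta^{-1}$, these two target pairs are literally equal, so $\pi(S,\chi^{-1}) \simeq \pi(S^\sigma,\chi^\sigma)$ and hence $\pi^\vee \simeq \pi^\sigma$. The substantive content has already been absorbed into Proposition \ref{prop. stable torus} via Hakim-Murnaghan theory and the finite-field input from Lemma \ref{lem. finite field}; the remaining manipulation is a transparent character identity on a $\sigma$-stable torus, and I do not expect any real obstacle at this stage.
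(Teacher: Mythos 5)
Your proof is correct and follows the same approach as the paper's: invoke Proposition \ref{prop. stable torus} to conjugate $(S,\chi)$ to a $\theta$-stable pair whose character is trivial on the $\theta$-fixed torus, deduce $\chi^\sigma=\chi^{-1}$ from the fact that $s\sigma(s)$ is $\theta$-fixed, and conclude via equation (\ref{equ. induction and contragredient}) and Lemma \ref{lem. galois conjugate and induction}. The paper compresses the conjugation bookkeeping into a ``we can and do assume'' and leaves the norm-element argument implicit, while your version spells out both explicitly.
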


\begin{proof}
According to \cite[Lemma 3.1.1]{ka14}, the equivalence class of
$\pi$ depends only on the $G(F)$-conjugate class of $(S,\chi)$. By
Proposition \ref{prop. stable torus}, we can and do assume that $S$
is $\theta$-stable and $\chi|_{T(F)}=1$, where $T=S^\theta$ is a
maximal torus of $H$. The condition $\chi|_{T(F)}=1$ implies that
$\chi^\sigma=\chi^{-1}$. Combining (\ref{equ. induction and
contragredient}) and Lemma \ref{lem. galois conjugate and
induction}, we obtain
$$\pi\left(S,\chi\right)^\sigma\simeq\pi\left(S^\sigma,\chi^\sigma\right)
=\pi\left(S,\chi^{-1}\right) \simeq\pi\left(S,\chi\right)^\vee.$$
Therefore, $\pi^\sigma\simeq\pi^\vee$.
\end{proof}

Now we have proved Theorem \ref{thm. theorem on representation} when
$\omega_H$ is trivial. We will construct an inflation $\omega_G$ of
$\omega_H$, which is a character of $G(F)$, and reduce Theorem
\ref{thm. theorem on representation} to Proposition \ref{prop.
distinction for trivial}.

First let us recall the construction of $\omega_H$. Let $\alpha$ be
the element in $H^1(W_F,\BZ/2\BZ)$ associated with the quadratic
extension $E$ of $F$. By choosing a regular unipotent element in
$\wh{H_\ad}$, we obtain a morphism $\SL_2(\BC)\ra\wh{H_\ad}$ and
thus a morphism $\beta:\BZ/2\BZ\ra Z(\wh{H_\ad})$ by restriction to
the center. It is explained in \cite[\S7]{pr01} that all characters
of $H(F)$ come from $H^1\left(W_F,Z(\wh{H_\ad})\right)$ and the
character $\omega_H$ is the one attached to $\beta\circ\alpha$.

Let $E'$ be the unique unramified extension of $E$ in $\bar{F}$ and
$\alpha'$ the element in $H^1(W_E,\BZ/2\BZ)$ associated with $E'$.
Similarly as the construction of $\omega_H$, we get a character
$\omega_G$ of $G(F)=H(E)$ associated to $\beta\circ\alpha'$. Note
that $\alpha$ factors through $W_F/I_F\simeq\BZ$ with kernel
$W_E/I_F$, and $\alpha'$ factors through $W_E/I_F\simeq2\BZ$, as a
subgroup of $W_F/I_F$, with kernel $W_{E'}/I_F$. Hence we have the
following commutative diagram
\begin{diagram}  W_F/I_F&&\rTo^{\alpha}
&\BZ/2\BZ\\
\dTo&&&\dTo\\
W_E/I_F&&\rTo^{\alpha'}& \BZ/2\BZ\\
\end{diagram}
where the left map is $t\mapsto 2t$. Therefore $\omega_G$ is an
inflation of $\omega_H$. Let $\omega_G^\sigma$ be the Galois
conjugate of $\omega_G$ for $\sigma\in\Gal(E/F)$.  Note that
$\Gal(E/F)\simeq W_F/W_E$ has a natural action on $W_E/I_F$ and
induces an action on $H^1(W_E/I_F,\BZ/2\BZ)$. Then $\omega_G^\sigma$
is the character of $G(F)$ associated with
$\beta\circ\sigma(\alpha')$. On the other hand, since $W_F/I_F$ is
abelian, $\sigma(\alpha')=\alpha'$. In summary, we have the
following lemma.

\begin{lem}\label{lem. inflate character}
The quadratic character $\omega_G$ is an inflation of $\omega_H$ and
satisfies $\omega_G^\sigma=\omega_G$.
\end{lem}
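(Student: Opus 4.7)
The plan is to reduce both claims to statements in Galois cohomology by unpacking the construction of $\omega_H$ and $\omega_G$ via the local Langlands correspondence for abelianizations. Both characters factor through the natural maps $H(F)\to H_\ab(F)$ and $H(E)\to H_\ab(E)$, and both are obtained by applying the same $\beta\colon\BZ/2\BZ\to Z(\wh{H_\ad})$ to parameters $\alpha$ and $\alpha'$; so everything comes down to the relationship between $\alpha$ and $\alpha'$.

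For the inflation claim, I would invoke the LLC for tori (applied to the torus $H_\ab$): restriction of characters along $H_\ab(F)\hookrightarrow H_\ab(E)$ corresponds, on the dual side, to the corestriction map $\Cor\colon H^1(W_E,Z(\wh{H_\ad}))\to H^1(W_F,Z(\wh{H_\ad}))$. It then suffices to verify $\Cor(\alpha')=\alpha$ in $H^1(W_F,\BZ/2\BZ)$. Since both characters are unramified and factor through $W_F/I_F\simeq\BZ$, one simply evaluates at a Frobenius lift $\Phi$: $\Cor(\alpha')(\Phi)=\alpha'(\Phi^2)=1=\alpha(\Phi)$. This is essentially the content of the commutative diagram displayed just before the lemma.

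For the Galois invariance claim, the action of $\sigma\in\Gal(E/F)$ on characters of $H(E)$ corresponds, under LLC, to the action of any lift $\tilde\sigma\in W_F$ on $H^1(W_E,Z(\wh{H_\ad}))$ by conjugation. Since $\alpha'$ factors through the abelian group $W_F/I_F$, conjugation by $\tilde\sigma$ acts trivially on $\alpha'$, so $\beta\circ\alpha'$ is $\sigma$-invariant and hence $\omega_G^\sigma=\omega_G$.

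The main obstacle will be formulating precisely the compatibility between restriction of characters on the group side and corestriction of cocycles on the Galois side under the LLC for tori; this is classical (via the transfer map of local class field theory), but directions and conventions need to be handled with care. Once that compatibility is in hand, both assertions follow immediately from the fact that $\alpha$ and $\alpha'$ agree on Frobenius and that $W_F/I_F$ is abelian.
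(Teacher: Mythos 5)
Your proof is correct and follows essentially the same route as the paper: both deduce the inflation claim from the relation $\alpha'(\Phi^2)=\alpha(\Phi)$ encoded in the commutative diagram (which you reformulate as $\Cor(\alpha')=\alpha$ via the compatibility of character restriction with the transfer map), and both deduce $\omega_G^\sigma=\omega_G$ from the abelianness of $W_F/I_F$. The only difference is one of packaging — you make the corestriction formalism explicit where the paper leaves it implicit in the diagram — so this is a faithful reconstruction of the paper's argument.
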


\begin{proof}[Proof of Theorem \ref{thm. theorem on representation}]
Suppose that $\pi$ is $\omega_H$-distinguished. Since $\omega_G$ is
an inflation of $\omega_H$, we have
$$\Hom_{H(F)}(\pi\otimes\omega_G,1)=\Hom_{H(F)}(\pi,\omega_G)
=\Hom_{H(F)}(\pi,\omega_H)\neq0.$$ In other words, we have
$\pi\otimes\omega_G$ is $H(F)$-distinguished. Since
$\pi\otimes\omega_G$ is also of depth zero, according to Proposition
\ref{prop. distinction for trivial}, we have
$$\pi^\vee\otimes\omega_G^{-1}\simeq
\left(\pi\otimes\omega_G\right)^\vee\simeq
\left(\pi\otimes\omega_G\right)^\sigma\simeq
\pi^\sigma\otimes\omega_G^\sigma.$$ By Lemma \ref{lem. inflate
character}, we have
$$\pi^\vee\simeq\pi^\sigma\otimes\omega^\sigma_G\omega_G=\pi^\sigma.$$

\end{proof}

\subsection{Consequences}\label{subsec. consequences}
Let $\varphi$ be a TRSELP for $G$ as before. Based on the
description of $\Pi^\vee_{\varphi}$ and
$\Pi_{\varphi}^{\circ,\sigma}$, we can investigate the problem of
distinction in terms of $L$-parameters. Recall that the hyperspecial
vertex $o$ and the Whittaker datum $(B,\psi)$ are chosen to be
$\sigma$-stable. Let $B'$ be the Borel subgroup of $H$ such that
$B=\R_{E/F}B'$, and $U'$ the unipotent radical of $B'$. Then
$U=\R_{E/F}U'$. Moreover we choose $\psi$ in the way that its
restriction to $U'(F)$ is trivial. Hence $\psi^\sigma=\psi^{-1}$.
From now on, for $\mu\in Z_{\varphi}^{D,\circ}$, we call
$\pi\left(S_\mu,\chi_\mu\right)$ {\em distinguished} for short if it
is $\omega_{H_a}$-distinguished for some $a\in\FB(H)_\bsc$ such that
$\res(a)=\mathsf{b}(\mu)$. If $\varphi'$ is another Langlands
parameter for $G$, we write $\varphi\sim\varphi'$ if these two
parameters are $\wh{G}$-conjugate. For $\mu\in Z_\varphi^D$ and
$\mu'\in Z_{\varphi'}^D$, we write
$(\varphi,\mu)\sim(\varphi',\mu')$ if there exists $\wh{g}\in\wh{G}$
such that $\varphi'=\Ad(\wh{g})\circ\varphi$ and
$\mu'=\Ad(\wh{g})\circ\mu$.

\begin{thm}\label{thm. distinction and parameters}
Suppose that $\pi\left(S_\mu,\chi_\mu\right)$ is distinguished. Then
we have
$$(C\circ\varphi,\mu)\sim(\delta\circ\varphi,\mu\circ\delta^{-1}).$$
\end{thm}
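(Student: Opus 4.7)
The plan is to combine Theorem \ref{thm. theorem on representation} with the two compatibility formulas proved in \S\ref{subsec. L-packets of contragredient} and \S\ref{subsec. L-packets of galois conjugate}, together with the canonicity of the enlarged local Langlands correspondence once a Whittaker datum is fixed. The essential observation is that, by the choice of $(B,\psi)$ made at the start of \S\ref{subsec. consequences}, we have $\psi^\sigma=\psi^{-1}$, so both formulas can be read through the \emph{same} parametrization $\iota_{B,\psi^{-1}}$.

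Concretely, I would first invoke Theorem \ref{thm. theorem on representation}: since $\pi(S_\mu,\chi_\mu)$ is distinguished, we obtain $\pi(S_\mu,\chi_\mu)^\vee\simeq\pi(S_\mu,\chi_\mu)^\sigma$ as irreducible representations of $G_b(F)$, where $b=\mathsf{b}(\mu)$. Next, using (\ref{equ. correspondence for contragredient}) I identify
\[
\pi(S_\mu,\chi_\mu)^\vee\;=\;\iota_{B,\psi^{-1}}\bigl(C\circ\varphi,\mu\bigr),
\]
and using Proposition \ref{prop. correspondence for galois conjugate} together with $\psi^\sigma=\psi^{-1}$ I identify
\[
\pi(S_\mu,\chi_\mu)^\sigma\;=\;\iota_{B,\psi^\sigma}\bigl(\delta\circ\varphi,\mu\circ\delta^{-1}\bigr)\;=\;\iota_{B,\psi^{-1}}\bigl(\delta\circ\varphi,\mu\circ\delta^{-1}\bigr).
\]
Thus the two refined parameters $(C\circ\varphi,\mu)$ and $(\delta\circ\varphi,\mu\circ\delta^{-1})$ are sent by the \emph{same} base-pointed map $\iota_{B,\psi^{-1}}$ to equivalent irreducible representations of $G_b(F)$.

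To finish, I would appeal to the fact that the enlarged local Langlands correspondence, once a Whittaker datum is fixed, descends to a bijection between $\wh{G}$-conjugacy classes of refined parameters $(\varphi',\mu')$ (with $\mu'\in Z_{\varphi'}^D$) and equivalence classes of representations in the disjoint union $\bigsqcup_{b}\Pi_{\varphi'}(G_b)$. In particular, distinct $\wh G$-conjugacy classes of parameters $\varphi'$ produce disjoint enlarged packets, and within a fixed packet the bijection (\ref{equ. bijection cocharacters}) together with the construction of \S\ref{subsec. construction of L-packets} is equivariant under $\wh G$-conjugation. Since both $(C\circ\varphi,\mu)$ and $(\delta\circ\varphi,\mu\circ\delta^{-1})$ map to the same representation under $\iota_{B,\psi^{-1}}$, this forces the conjugacy
\[
(C\circ\varphi,\mu)\;\sim\;(\delta\circ\varphi,\mu\circ\delta^{-1}),
\]
as required.

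The only genuinely subtle point I foresee is the last step: one has to verify that the identification of the two images of $\iota_{B,\psi^{-1}}$ really does upgrade from ``equivalent representations'' to ``$\wh G$-conjugate refined parameters''. This is where the canonicity of the enlarged correspondence is used, and it is the reason for insisting on $\sigma$-stable choices of $o$, $B$ and on $\psi$ with $\psi|_{U'(F)}$ trivial so that $\psi^\sigma=\psi^{-1}$; otherwise one would only obtain conjugacy up to an ambiguity coming from the Whittaker datum, which is precisely the obstruction the hypotheses are designed to avoid.
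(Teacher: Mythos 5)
Your proposal is correct and follows exactly the same route as the paper: both identify $\pi(S_\mu,\chi_\mu)^\vee$ and $\pi(S_\mu,\chi_\mu)^\sigma$ via (\ref{equ. correspondence for contragredient}) and Proposition \ref{prop. correspondence for galois conjugate} respectively, use $\psi^\sigma=\psi^{-1}$ to read both through the single map $\iota_{B,\psi^{-1}}$, and then conclude from Theorem \ref{thm. theorem on representation}. The only difference is that you spell out the final ``injectivity up to $\wh G$-conjugacy'' step more explicitly than the paper, which leaves it implicit in the canonicity of the enlarged correspondence.
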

\begin{proof}
First, by (\ref{equ. correspondence for contragredient}), we have
$$\pi\left(S_\mu,\chi_\mu\right)^\vee
=\iota_{B,\psi^{-1}}({C\circ\varphi},\mu).$$ On the other hand, by
Proposition \ref{prop. correspondence for galois conjugate}, we have
$$\pi\left(S_\mu,\chi_\mu\right)^\sigma
=\iota_{B,\psi^\sigma}(\delta\circ\varphi,\mu\circ\delta^{-1})
=\iota_{B,\psi^{-1}}(\delta\circ\varphi,\mu\circ\delta^{-1}).$$ Then
the conclusion follows from Theorem \ref{thm. theorem on
representation}.
\end{proof}

\paragraph{The group $H^\op$.}
Now we interpret Theorem \ref{thm. distinction and parameters} in
the language of functoriality. By twisting the Galois structure
$\Gal(\bar{F}/F)\ra\Out(H(\bar{F}))$ of $H$ via sending
$\sigma\in\Gal(E/F)$ to a fixed Chevalley involution $C$ (over $F$)
in $\Out(H(\bar{F}))$, we obtain a quasi-split reductive group
$H^\op$ over $F$, which is isomorphic to $H$ over $E$. We can
identify the $L$-group $^LH^\op$ of $H^\op$ as the subgroup
$$\left\{\left(h,C(h)\right): h\in\wh{H}\right\}\rtimes W_F$$ of
$^LG=\left(\wh{H}\times\wh{H}\right)\rtimes W_F$ up to
$\wh{G}$-conjugacy, where $C$ is a Chevalley involution on $\wh{H}$.

The following corollary, which is a direct consequence of Theorem
\ref{thm. distinction and parameters}, shows that a regular
depth-zero supercuspidal $L$-packet which contains a distinguished
representation should be a functorial lift from $H^\op$ via the base
change map $^LH^\op\ra{^LG}$.

\begin{cor}\label{cor. distinction and packets}
If $\pi\left(S_\mu,\chi_\mu\right)$ is distinguished for some
$\mu\in Z_{\varphi}^{D,\circ}$, we have
\begin{enumerate}
\item $\Pi_{C\circ\varphi}=\Pi_{\delta\circ\varphi},$
\item The parameter $\varphi$ factors through $^L{H^\op}$.
\end{enumerate}
\end{cor}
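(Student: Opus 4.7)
The plan is to deduce both parts of Corollary \ref{cor. distinction and packets} as essentially formal consequences of Theorem \ref{thm. distinction and parameters}, modulo a careful identification of ${^LH^\op}$ inside ${^LG}$. From the hypothesis that $\pi(S_\mu,\chi_\mu)$ is distinguished, Theorem \ref{thm. distinction and parameters} provides $\wh{g}\in\wh{G}$ witnessing $(C\circ\varphi,\mu)\sim(\delta\circ\varphi,\mu\circ\delta^{-1})$. In particular $\delta\circ\varphi=\Ad(\wh{g})\circ C\circ\varphi$, so $C\circ\varphi$ and $\delta\circ\varphi$ represent the same $\wh{G}$-conjugacy class of Langlands parameters.

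Part (1) would follow immediately. Indeed, the construction (\ref{equ. definition of L-packet}) shows that the enlarged $L$-packet $\Pi_{\varphi'}$ depends only on the $\wh{G}$-conjugacy class of $\varphi'$, so $\wh{G}$-conjugacy of $C\circ\varphi$ with $\delta\circ\varphi$ forces $\Pi_{C\circ\varphi}=\Pi_{\delta\circ\varphi}$.

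For part (2), I would introduce the involution $\tau:=\delta\circ C$ on ${^LG}$ and compute its fixed subgroup. Using ${^LG}=(\wh{H}\times\wh{H})\rtimes W_F$ with $\delta$ swapping the two copies and $C$ acting diagonally as a Chevalley involution, one gets $\tau(x,y,w)=(C(y),C(x),w)$, so the $\tau$-fixed subgroup is exactly $\{(h,C(h)):h\in\wh{H}\}\rtimes W_F$, which by the description recalled just above the corollary is the image of ${^LH^\op}$ in ${^LG}$ up to $\wh{G}$-conjugacy. The relation $C\circ\varphi\sim\delta\circ\varphi$ rewrites as $\tau\circ\varphi\sim\varphi$, so the $\wh{G}$-conjugacy class of $\varphi$ is $\tau$-stable; equivalently, $\varphi$ factors through ${^LH^\op}$ up to conjugacy.

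The step that actually requires care is the upgrade from ``$\tau$-stable orbit'' to ``orbit containing a $\tau$-fixed representative'', which a priori has a Tate-cohomological obstruction in $\wh{H}^0(\langle\tau\rangle,Z_\varphi)$; this is where the finer compatibility $\mu\circ\delta^{-1}=\Ad(\wh{g})\circ\mu$ on the centralizer $Z_\varphi$ (the second component of the relation in Theorem \ref{thm. distinction and parameters}) is essential, as it constrains precisely how $\wh{g}$ acts on $Z_\varphi$ and kills the obstruction. Alternatively, one can bypass the cohomology entirely by invoking the explicit $L$-packet datum $(S,{^Lj},a,\chi)$ for $\varphi$ and exploiting the relations $\chi^\sigma=\chi^{-1}$ and hence $a^\sigma=a^{-1}$ that are extracted in the proof of Theorem \ref{thm. theorem on representation} (via Proposition \ref{prop. stable torus}); these directly show $\delta\circ\varphi=C\circ\varphi$ after a suitable choice of representative, exhibiting $\varphi$ as pointwise landing in ${^LH^\op}$. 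The main obstacle, then, is really just the bookkeeping to check that the Proposition~\ref{prop. stable torus} data (or, equivalently, the refined $\mu$-compatibility) is exactly what's needed to close the cohomological gap.
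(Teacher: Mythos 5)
Your treatment of part (1) coincides with the paper's: it is an immediate consequence of Theorem~\ref{thm. distinction and parameters} together with the fact that the enlarged packet $\Pi_{\varphi'}$ depends only on the $\wh{G}$-conjugacy class of $\varphi'$.

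For part (2), your reformulation via the involution $\tau=\delta\circ C$ with fixed subgroup ${^LH^\op}$ is the same argument as the paper's, just packaged differently: the paper writes $\varphi_0=(\varphi_1,\varphi_2)$ and reads $C\circ\varphi\sim\delta\circ\varphi$ as $\varphi_2\sim C\circ\varphi_1$, which is exactly the condition ``$\tau\circ\varphi\sim\varphi$'' in coordinates. Where you diverge is in flagging the passage from ``the $\wh{G}$-orbit of $\varphi$ is $\tau$-stable'' to ``the orbit contains a representative landing in $({^LG})^\tau={^LH^\op}$'' as a step with a potential descent obstruction. The paper treats this passage as immediate (``that is, $\varphi$ factors through $^LH^\op$''), so you are asking for more than the paper supplies; this is a fair observation, and your second proposed route --- using the $L$-packet datum $(S,{^Lj},a,\chi)$ together with the relation $\chi^\sigma=\chi^{-1}$ coming out of Proposition~\ref{prop. stable torus} to exhibit a pointwise $\tau$-fixed representative --- is the concrete mechanism by which the descent is visibly unobstructed here, and matches the spirit of how the paper argues elsewhere. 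However, you only gesture at both of your proposed resolutions: the claim that the refined compatibility $\mu\circ\delta^{-1}=\Ad(\wh{g})\circ\mu$ ``kills the obstruction'' is asserted rather than shown (and it is not clear that a constraint on characters of $Z_\varphi$ is what controls the relevant $\wh{H}^0$ class), and the explicit-datum route is left as a sketch. As it stands your proof does not actually close part (2) to the standard you yourself set; you should commit to the torus-datum argument and carry it through, or else accept the paper's implicit convention that $\varphi_2\sim C\circ\varphi_1$ is what ``factors through $^LH^\op$'' means.
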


\begin{proof}
The first assertion is due to Theorem \ref{thm. distinction and
parameters}. For the second assertion, write the projection
$\varphi_0$ of $\varphi$ to $\wh{G}$ as $(\varphi_1,\varphi_2)$ so
that
$$\varphi_0(w)=\left(\varphi_1(w),\varphi_2(w)\right)$$ for
$w\in W_F$. Then the condition $C\circ\varphi\sim\delta\circ\varphi$
implies that $\varphi_2\sim C\circ\varphi_1$, that is, $\varphi$
factors through $^LH^\op$.

\end{proof}

\s{\small Chong Zhang\\
School of Mathematical Sciences, Beijing Normal University,\\
Beijing 100875, P. R. China.\\
E-mail address: \texttt{zhangchong@bnu.edu.cn}}

\end{document}